\title[$L^p$-gradient harmonic maps]{\protect{$L^p$-gradient harmonic maps into spheres and SO(N)}}
\author{Armin Schikorra}
\address{Armin Schikorra, Max-Planck Institut MiS Leipzig, Inselstr. 22, 04103 Leipzig, Germany, {\tt armin.schikorra@mis.mpg.de}}
\thanks{The research leading to these results has received funding from the European Research Council under the European Union's Seventh Framework Programme (FP7/2007-2013) / ERC grant agreement n° 267087.}
\def\eps{\varepsilon}
\def\N{{\mathbb N}}
\def\S{{\mathbb S}}
\newtheorem{theorem}{Theorem}
\newtheorem{lemma}[theorem]{Lemma}
\newtheorem{proposition}[theorem]{Proposition}
\theoremstyle{definition}
\def\supp{{\rm supp\,}}
\newcommand{\R}{\mathbb{R}}
\newcommand{\Z}{\mathbb{Z}}
\newcommand{\vrac}[1]{\| #1 \|}
\newcommand{\brac}[1]{\left (#1 \right )}
\newcommand{\barint}{
\rule[.036in]{.12in}{.009in}\kern-.16in \displaystyle\int }
\newcommand{\barcal}{\mbox{$ \rule[.036in]{.11in}{.007in}\kern-.128in\int $}}
\def\mvint_#1{\mathchoice
          {\mathop{\vrule width 6pt height 3 pt depth -2.5pt
                  \kern -8pt \intop}\nolimits_{\kern -3pt #1}}%
          {\mathop{\vrule width 5pt height 3 pt depth -2.6pt
                  \kern -6pt \intop}\nolimits_{#1}}%
          {\mathop{\vrule width 5pt height 3 pt depth -2.6pt
                  \kern -6pt \intop}\nolimits_{#1}}%
          {\mathop{\vrule width 5pt height 3 pt depth -2.6pt
                  \kern -6pt \intop}\nolimits_{#1}}}
\numberwithin{theorem}{section} \numberwithin{equation}{section}
\newcommand{\lap}{\Delta }
\newcommand{\aleq}{\precsim}
\newcommand{\aeq}{\approx}
\newcommand{\Rz}{\mathcal{R}}
\newcommand{\laps}[1]{\lap^{\frac{#1}{2}}}
\newcommand{\lapms}[1]{\lap^{-\frac{#1}{2}}}
\newcommand{\cut}{\accentset{\bullet}{\chi}}
\newcommand{\cutA}{\accentset{\circ}{\chi}}
\begin{document}

\sloppy

\subjclass[2010]{58E20, 35B65, 35J60, 35S05}

\sloppy


\begin{abstract}
We consider critical points of the energy
\[
 E(v) := \int_{\R^n} |\nabla^s v|^{\frac{n}{s}},
\]
where $v$ maps locally into the sphere or $SO(N)$, and $\nabla^s = (\partial_1^s,\ldots,\partial_n^s)$ is the formal fractional gradient, i.e. $\partial_\alpha^s$ is a composition of the fractional laplacian with the $\alpha$-th Riesz transform. We show that critical points of this energy are H\"older continuous. 

As a special case, for $s = 1$, we obtain a new, more stable proof of Fuchs and Strzelecki's regularity result of $n$-harmonic maps into the sphere \cite{Strz94,FuchsnharmSphere}, which is interesting on its own.
\end{abstract}

\maketitle

\section{Introduction}
Fix $s \in (0,n)$ and a domain $\Omega \subset \R^n$. In \cite{DSpalphSphere} Da Lio and the author proved H\"older continuity of critical points of the energy
\[
 \tilde{E}_s(v) := \int_{\R^n} |\laps{s} v|^{\frac{n}{s}}
\]
for mappings $v: \R^n \to \R^N$, such that $v(x)$ belongs to the $(N-1)$-dimensional sphere $\S^{N-1}$ for a.e. $x \in \Omega$. Here, $\laps{s}$ denotes the fractional laplacian which for $s \in (0,1)$ is defined as
\[
 \laps{s} v(x) = c_s\ \int_{\R^n} \frac{v(x)-v(y)}{|x-y|^{n+s}}\ dy,
\]
and more generally for $s \in (0,n)$ is defined via its Fourier transform 
\[
 \mathcal{F}(\laps{s} v)(\xi) = c_s |\xi|^s\ \mathcal{F}v(\xi).
\]
A priori, mappings with finite energy $\tilde{E}_s$ belong to BMO, and the structure of the Euler-Lagrange equation is
\[
 \laps{s} (|\laps{s} u|^{\frac{n}{s}-2}\laps{s} u) \in L^1,
\]
a structure which in general allows discontinuous solutions such as $\log \log |x|$ -- the equation is critical.

The motivation for defining an energy like $\tilde{E}$ in \cite{DSpalphSphere} comes from the $n$-harmonic mappings which are critical points of the energy
\[
 E_1(v) := \int_{\R^n} |\nabla v|^{n}, \quad v: \Omega \to \S^{N-1},
\]
whose regularity properties had been studied in the sphere-case by Strzelecki and Fuchs \cite{Strz94,FuchsnharmSphere}. The case where the target sphere $\S^{N-1}$ is replaced by a general closed manifold is largely open, and only under additional assumptions on the solution (which seem unnatural from the point of view of the Calculus of Variations) there are regularity results, cf. \cite{DM10,Kol10,SnHsystemOrlicz}. 
On the other hand, in \cite{Seps} the author showed that the methods from the theory of fractional harmonic maps (i.e. the $L^2$-case) can treat very general Euler-Lagrange equations, which contain as special case both, fractional, i.e. the results of \cite{DR1dMan}, and classical harmonic maps, \cite{Riv06}. Consequently, there is hope to obtain new approaches to the classical energy $E_1$ by investigating the regularizing mechanisms of the fractional harmonic maps.

Nevertheless, the energy $\tilde{E}_1 = \vrac{\laps{1} v}_{L^n}^n$ is different from $E_1 = \vrac{\nabla v}_{L^n}^n$, and it is easier to handle:
Indeed it turned out that the regularity of critical points of $\tilde{E}_s$ in \cite{DSpalphSphere} follows essentially from the theory of fractional harmonic maps into spheres \cite{DR1dSphere, SNHarmS10}, since it is possible to treat $|\laps{s} v|^{\frac{n}{s}-2}$ simply as a weight. In particular, the arguments \cite{DSpalphSphere} fail to recover Strzelecki's/Fuchs' result \cite{Strz94,FuchsnharmSphere} for $E_1$.

Hence, here we are interested in the regularity of critical points of the energy
\[
 E_s(v) := \int_{\R^n} |\nabla^s v|^{\frac{n}{s}}, \quad v: \Omega \to \S^{N-1}.
\]
Here,
\[
 \nabla^s v = \left (\Rz_1 [\laps{s} v], \Rz_2 [\laps{s} v], \ldots, \Rz_n [\laps{s} v]\right )^T,
\]
where $\Rz_\alpha$ is the $\alpha$-th Riesz transform, i.e. the operator with Fourier symbol $i \xi^\alpha/|\xi|$. Let us also remark, that there has recently been some interest in the classical theory of linear and non-linear equations involving $\nabla^s$ \cite{DanielsDs}.

Now $E_s$ contains for $s=1$ the classical $n$-harmonic maps case
\[
 E_1(v) := \int_{\R^n} |\nabla v|^{n}, \quad v: \Omega \to \S^{N-1}.
\]
We then obtain the following theorem
\begin{theorem}\label{th:mainS}
Let $u: \R^n \to \R^N$, $s \in (0,n)$ such that $E_s(u) < \infty$, $u(\Omega) \subset \S^{N-1}$, and assume that $u$ is a critical point of $E_s$, i.e.
\[
 \frac{d}{dt}\Big |_{t = 0} E_s\brac{\frac{u+t\varphi}{|u+t\varphi|} }= 0 \quad \mbox{for any $\varphi \in C_0^\infty(\Omega,\R^N)$.}
\]
Then there exists $\alpha > 0$ such that $u \in C^{0,\alpha}(\Omega)$.
\end{theorem}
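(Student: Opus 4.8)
The plan is to adapt the scheme used for fractional harmonic maps into spheres --- derive the Euler--Lagrange equation, exhibit a hidden conservation law, prove a compensation estimate, deduce Morrey decay --- to the degenerate $(n/s)$-growth setting, the guiding principle being that the weight $|\nabla^s u|^{\frac ns-2}$ is never differentiated. Put $w := |\nabla^s u|^{\frac ns-2}\nabla^s u$; finite energy gives $\nabla^s u\in L^{n/s}$, hence (borderline Sobolev embedding) $u$ is bounded on $\Omega$ and lies in $\mathrm{BMO}$ there, while $w\in L^{(n/s)'}$. First I would compute the first variation: with $\pi(v)=v/|v|$ and $d\pi(u)[\varphi]=\varphi-\langle u,\varphi\rangle u\in T_u\S^{N-1}$, criticality reads $\int_{\R^n}\langle w,\nabla^s(\varphi-\langle u,\varphi\rangle u)\rangle=0$ for all $\varphi\in C_0^\infty(\Omega,\R^N)$. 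Using $|u|^2\equiv1$ on $\Omega$ this becomes a Rivi\`ere-type system $\divv^s w^i=\sum_j\Omega^i_j\cdot w^j+R^i$ (with $\divv^s W:=\sum_\alpha\partial_\alpha^s W_\alpha$, the dot a contraction over the $\R^n$-index), where $\Omega^i_j:=u^i\nabla^s u^j-u^j\nabla^s u^i=-\Omega^j_i$ is an antisymmetric potential and $R^i$ is built entirely out of three-term commutators of the type $\laps{s}(fg)-f\laps{s}g-g\laps{s}f$ and $[\Rz_\alpha,f]\laps{s}g$ applied to the factors $u,\nabla^s u,w$. For the sphere one gets more: a direct manipulation of the equation produces the approximate conservation law
\[
 \divv^s\big(w^i\,u^j-w^j\,u^i\big)=\mathcal{R}^{ij},
\]
with $\mathcal{R}^{ij}$ again a sum of such commutators. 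This is the exact fractional analogue of the Fuchs/Strzelecki identity $\divv\big(|\nabla u|^{n-2}(\nabla u^i\,u^j-\nabla u^j\,u^i)\big)=0$ for $n$-harmonic maps into $\S^{N-1}$, and it is what allows one to avoid any Uhlenbeck-type gauge here.

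The analytic core is a compensation estimate: after localization by a cutoff, I would show that every commutator remainder occurring in $R$ and in $\mathcal{R}$, together with the term $\sum_j\Omega^i_j\cdot w^j$ (which naively sits only in $L^1$), in fact lies in a space strictly better than $L^1$ --- a local Hardy space $\H^1$, equivalently a Morrey space $\M^{1,\lambda}_{\loc}$ with $\lambda>0$ --- with norm over a ball $B_r$ bounded by a positive power of the local energy $\int_{B_{2r}}|\nabla^s u|^{\frac ns}$. Two ingredients enter: (i) three-commutator and Riesz-commutator estimates (of Da Lio--Rivi\`ere and Coifman--Rochberg--Weiss type), pushed to the Lorentz--Morrey scale dictated by the exponents $n/s$ and $(n/s)'$; and (ii) the Coifman--Lions--Meyer--Semmes div--curl phenomenon, applied to the antisymmetric pair $(w,\nabla^s u)$ through a fractional Hodge decomposition $w=\nabla^s A+w_0$, $\divv^s w_0=0$. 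Throughout, the estimates must be kept invariant under the rescaling $u(\cdot)\mapsto u(r\cdot)$, under which $E_s$ is invariant.

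Once the source in the equation for $w$ is known to lie in $\H^1_{\loc}$ (resp.\ a Morrey space) with small local norm, I would run the standard decay machinery: decompose $w$ via the fractional Hodge decomposition, estimate the divergence-free part by Campanato-type estimates for the homogeneous nonlocal equation, and estimate the $\nabla^s A$-part through the identity $\divv^s\nabla^s=-c_n(-\lap)^{s}$ --- i.e.\ by a Riesz potential of the source --- using the Adams-type embedding of $\H^1$ (resp.\ Morrey) into a Morrey/Campanato class. With a hole-filling argument this yields, for $x_0$ in a compact subset of $\Omega$ and $r$ small, $\int_{B_r(x_0)}|\nabla^s u|^{\frac ns}\aleq r^{\delta}$ for some $\delta>0$; iterating in $r$ and invoking the fractional Morrey/Campanato embedding gives $u\in C^{0,\alpha}_{\loc}(\Omega)$. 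The degeneracy is handled by the observation that the correspondence $w\mapsto\nabla^s u$ is, up to H\"older's inequality, the inverse power nonlinearity, so Morrey decay of $|w|$ in $L^{(n/s)'}$ is equivalent to that of $|\nabla^s u|$ in $L^{n/s}$; on balls where the energy is small the weight $|\nabla^s u|^{\frac ns-2}$ therefore does not spoil the estimates, provided one has a Gehring / Iwaniec--Sbordone higher-integrability bound to create the room needed to close the iteration. This stability is exactly the robust feature advertised in the introduction, and for $s=1$ it reproduces the Fuchs/Strzelecki regularity of $n$-harmonic maps into spheres.

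I expect the main obstacle to be the compensation step in the presence of the degeneracy: $w=|\nabla^s u|^{\frac ns-2}\nabla^s u$ has no integrability beyond $L^{(n/s)'}$, the weight may never be differentiated, and yet the fractional Leibniz and Riesz-commutator errors must be controlled in sharp Lorentz--Morrey classes so that the compensation genuinely improves on $L^1$ and the Morrey-decay iteration closes. Making the three-commutator estimates and the div--curl compensation function simultaneously with the $(n/s)$-growth --- rather than in the comfortable Hilbertian settings ($L^2$, or $n=1$, $s=1/2$) where these tools were developed --- together with the higher-integrability bookkeeping that renders the degenerate nonlinearity harmless, is the technical heart of the argument.
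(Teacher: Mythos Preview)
Your compensation step has a real gap, and it is precisely the obstruction the paper isolates in Section~\ref{s:classical}. You want to place $\Omega^i_j\cdot w^j$ (and the analogous terms) in $\mathcal{H}^1$ or a better Morrey class via the Coifman--Lions--Meyer--Semmes mechanism, after a Hodge decomposition $w=\nabla^s A+w_0$. But CLMS needs one factor curl-free and the other divergence-free; $\nabla^s u$ is curl-free, yet $w=|\nabla^s u|^{p_s-2}\nabla^s u$ carries no curl information whatsoever --- for $p_s\neq 2$ the weight $|\nabla^s u|^{p_s-2}$ destroys the curl structure, and neither the equation nor your approximate conservation law $\divv^s(w^iu^j-w^ju^i)=\mathcal{R}^{ij}$ controls it. Concretely, after your Hodge split the piece $\nabla^s u^j\cdot\nabla^s A^j$ is a product of two curl-free fields and has no compensation at all; and if instead you try to Hodge-decompose the approximately divergence-free field $w^iu^j-w^ju^i$, the curl of that field again contains derivatives of the weight. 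The paper makes exactly this point: ``in general for $n\neq 2$ it is not clear why it should be true that $\operatorname{curl}(|\nabla u|^{n-2}\,u^i\omega_{ij}\nabla u^j)\in\mathcal{H}^1$,'' and concludes that the direct div--curl route is not feasible. The Gehring/Iwaniec--Sbordone step you invoke does not create the missing structure.

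The paper's way around this is a different reduction that never sees the curl. One first shows (Lemma~\ref{la:lefthandside}) that
\[
\vrac{\cut\,\laps{s}u}_{p_s}^{p_s}\ \aleq\ \vrac{\cut\,\Rz_\beta[\,|\nabla^s u|^{p_s-2}\partial_\beta^s u\,]}_{p_s'}^{p_s'}+\mbox{tails},
\]
i.e.\ one estimates the \emph{scalar} $\Rz_\beta w_\beta$ rather than $w$ itself; only the divergence of $w$ enters, and the curl problem disappears. Then $\Rz_\beta[w_\beta]$ is decomposed \emph{pointwise} via $|u|=1$ into an ``orthogonal'' piece $u^i\Rz_\beta[w^i_\beta]$ and ``tangential'' pieces $u^i\omega_{ij}\Rz_\beta[w^j_\beta]$. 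The first is a Coifman--Rochberg--Weiss commutator $\mathcal{C}(u,\Rz_\beta)$ acting on $w_\beta$ (since $u^iw^i_\beta$ is, up to the bi-commutator $H_{\partial^s_\beta}(u,u)$, equal to $\partial^s_\beta|u|^2=0$); the second, after pairing with $\laps{s}\varphi$ and using the Euler--Lagrange equation together with the antisymmetry of $\omega$, reduces to $H_s$- and CRW-commutators in $\varphi$ and $u$. All the gain comes from commutator estimates against $[u]_{BMO}$ and $\vrac{\laps{s}u}_{p_s}$ on small balls; there is no CLMS div--curl, no Hodge decomposition, and no higher integrability --- the argument stays at the critical exponent $p_s=n/s$ throughout and closes by the iteration of Lemma~\ref{la:mainl}.
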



As mentioned above, one of our main motivations for this work was to obtain an argument that extends to the classical case of $E_1$. We think that the new proof for \cite{Strz94} following from the proof of Theorem~\ref{th:mainS} is interesting in its own right, since it seems to be more robust than the original proof, or H{\'{e}}lein's proof for the $n=2$ case \cite{Hel90}. In Section~\ref{s:classical} we describe a possibly new angle for a proof of H{\'{e}}lein's \cite{Hel90}, and then describe how our argument for $E_s$ can extend this idea to the $n$-case treated in \cite{Strz94,FuchsnharmSphere}. In particular, in this part we explain the main steps of the proof of Theorem~\ref{th:mainS}.

In the classical case $s = 1$, the arguments for the sphere case \cite{Hel90} can be naturally extended to more general manifolds with symmetries \cite{Hel91}, using Noether's theorem. For the $p$-harmonic case, cf. \cite{ToroWang}. In the case of small $s < 1$, we loose the ability to work with tangent spaces, since $\partial_i u$ is only a distribution. Nevertheless, it not too difficult to extend our argument to a very special case of a Lie Group. Indeed, the case where the unit sphere $\S^{N-1}$ is replaced by the special orthogonal group $SO(N) \subset \R^{N \times N}$ follows along the same lines as Theorem~\ref{th:mainS}.
\begin{theorem}\label{th:mainSO}
Let $u: \R^n \to \R^{N\times N}$, $s \in (0,n)$ such that $E_s(u) < \infty$, $u(\Omega) \subset SO(N)$. Let $\pi: B_\delta(SO(N)) \to SO(N)$ be the orthogonal projection from a tubular neighbourhood onto $SO(N)$, and assume that $u$ is a critical point of $E_s$, i.e.
\[
 \frac{d}{dt}\Big |_{t = 0} E_s\brac{\pi(u+t\varphi)}= 0 \quad \mbox{for any $\varphi \in C_0^\infty(\Omega,\R^{N\times N})$.}
\]
Then there exists $\alpha > 0$ such that $u \in C^{0,\alpha}(\Omega)$.
\end{theorem}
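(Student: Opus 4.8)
The plan is to run the proof of Theorem~\ref{th:mainS} essentially unchanged, replacing the algebraic input $|u|^2=1$ by $u^Tu=\id$. First I would make the Euler--Lagrange equation explicit. Since $\pi$ is the smooth nearest-point projection of a tubular neighbourhood onto $SO(N)$, its differential at a point $u\in SO(N)$ is the orthogonal projection onto the tangent space, and differentiating $u^Tu=\id$ along curves gives $T_u SO(N)=u\cdot\mathfrak{so}(N)=\{uA:\ A^T=-A\}$. Thus the criticality hypothesis is equivalent to
\[
 \int_{\R^n} |\nabla^s u|^{\frac{n}{s}-2}\ \nabla^s u : \nabla^s(uA)\,dx = 0 \qquad\text{for all } A\in C_0^\infty(\Omega,\mathfrak{so}(N)),
\]
which is the direct $SO(N)$-counterpart of the tangential testing used for $\S^{N-1}$, with $\mathfrak{so}(N)$ playing the role of $\bigwedge^2\R^N$.

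Second I would extract an antisymmetric potential. Since $\nabla^s$ is non-local, $\nabla^s(uA)$ differs from the naive Leibniz expression $A\,\nabla^s u+u\,\nabla^s A$ by a commutator term, which I would estimate with the fractional product rules and commutator estimates already used for Theorem~\ref{th:mainS}. The essential cancellation comes from the pointwise constraint $u^Tu=\id$ a.e.\ in $\Omega$: classically $\partial_\beta(u^Tu)=0$ forces $u^T\partial_\beta u\in\mathfrak{so}(N)$, and fractionally the same holds modulo a commutator of the (compensated) type just mentioned. Exploiting this, the equation for $u$ takes the schematic shape of an $L^{n/s}$-gradient harmonic map system with an $\mathfrak{so}(N)$-valued — hence antisymmetric — potential $\Omega$ built from $\nabla^s u$, which is precisely the structure on which the sphere argument rests.

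From this point the argument is, I expect, word for word that of Theorem~\ref{th:mainS}: put the antisymmetric potential into a good gauge by a Hodge-type decomposition, use the fractional div--curl and commutator lemmas to pair the non-local products against $|\nabla^s u|^{\frac{n}{s}-2}\nabla^s u\in L^{n/(n-s)}$ at the critical scale, and iterate the resulting local decay estimate
\[
 \int_{B_r(x_0)}|\nabla^s u|^{\frac{n}{s}}\,dx \aleq r^{\gamma}
\]
for some $\gamma>0$ and all small balls $B_r(x_0)\subset\subset\Omega$, which yields $u\in C^{0,\alpha}_{\loc}(\Omega)$.

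I expect the main obstacle to be the same one that must be overcome for Theorem~\ref{th:mainS}, namely controlling the non-local product $\nabla^s(uA)$: one has to show that the commutator $[\nabla^s,u]A$, tested against $|\nabla^s u|^{\frac{n}{s}-2}\nabla^s u$, has strictly subcritical growth, so that only the antisymmetric part of the potential matters at the critical scale. The $SO(N)$-specific input is precisely that $u^Tu=\id$ produces the antisymmetry needed to close this estimate, just as $|u|=1$ does for $\S^{N-1}$; granting that, the degenerate ellipticity of the $L^{n/s}$-gradient — which rules out linearisation and forces one to work directly with the growth functional — is handled exactly as in the proof of Theorem~\ref{th:mainS}. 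The remaining discrepancies, the matrix bookkeeping for $\mathfrak{so}(N)$ in place of $\bigwedge^2\R^N$ and the fact that $uA$ rather than a cross-product field is the natural tangential variation, are purely notational.
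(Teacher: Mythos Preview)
Your high-level plan is right --- the paper does prove Theorem~\ref{th:mainSO} by rerunning the sphere scheme with $u^Tu\equiv I$ in place of $|u|^2\equiv 1$, and your Euler--Lagrange identity is exactly the paper's \eqref{eq:ourELeq}. But your description of what that scheme \emph{is} misfires. The proof of Theorem~\ref{th:mainS} uses no antisymmetric potential $\Omega$, no gauge change, no Hodge decomposition, and no div--curl lemma. Section~\ref{s:classical} explicitly explains why that Rivi\`ere/H\'elein-style route already breaks down for classical $n$-harmonic maps ($s=1$, $n>2$): there is no reason for $\curl(|\nabla u|^{n-2}\,u^i\omega_{ij}\nabla u^j)$ to lie in $\mathcal H^1$, so the compensated-compactness structure is simply unavailable once $p_s=n/s\neq 2$. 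Your proposed mechanism would therefore not close.

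The paper's actual mechanism is different. Lemma~\ref{la:lefthandside} first replaces $\|\laps{s}u\|_{p_s}^{p_s}$ by $\|\Rz_\beta[|\nabla^s u|^{p_s-2}\partial_\beta^s u]\|_{p_s'}^{p_s'}$, a \emph{scalar} quantity on which no curl ever has to be taken; this is the key step that bypasses the rotation obstruction. For $SO(N)$ this scalar is decomposed pointwise via Proposition~\ref{pr:decompSO}, the matrix analogue of \eqref{eq:nablaudecomp}: for $Q\in SO(N)$ one has $|A|\aleq \max_{\theta\in sym(N)}|\theta_{ij}Q^{ik}A_{kj}|+\max_{\omega\in so(N)}|\omega_{ij}Q^{ik}A_{kj}|$. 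The symmetric (``orthogonal'') piece is handled in Lemma~\ref{la:orthogonalpartSO}: the constraint $u^Tu\equiv I$ collapses it to a bi-commutator $H_{\partial_\alpha^s}(u^{ki},u^{kj})$ plus a Coifman--Rochberg--Weiss commutator $\mathcal C(u^{ki},\Rz_\alpha)[\cdot]$, both controlled by $[u]_{BMO}$. The antisymmetric (``tangential'') piece is handled in Lemma~\ref{la:tangentialpartSO}: your Euler--Lagrange identity together with the pointwise cancellation $\omega_{ij}\,\partial_\alpha^s u^{kj}\,\partial_\alpha^s u^{ki}=0$ reduce it to the same two commutator types. All smallness at small scales comes directly from $[u]_{BMO}$ through these commutator estimates --- no gauge step appears anywhere.
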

The proof of Theorem~\ref{th:mainS} and Theorem~\ref{th:mainSO} are given in Section~\ref{sec:notaton}.

We introduce some notation in Section~\ref{sec:notaton}, but let us remark here, that we will make frequent use of Einstein's notation, i.e. summing over repeated indices. Also we will denote with greek symbols coordinates in the domain, in particular $\partial_\alpha$ is the $\alpha$'s derivative. Coordinates in the target will be denoted by roman letters, such as $(u^i(x))_{i=1}^N \in \R^N$. With $\aleq$, $\aeq$ we denote estimates up to multiplicative constants (which depend on dimension, $s$, etc., but not on relevant information), i.e. $A \aleq B$ means that there is some $C > 0$ such that $A \leq C\ B$. Also we will denote the $L^p(A)$-norm by $\vrac{\cdot}_{p,A}$. With $p'$ we denote the H\"older dual of $p$, $p' = \frac{p}{p-1}$.

\section{A regularity proof for \texorpdfstring{$n$}{n}-harmonic maps into spheres}\label{s:classical}
In this section we explain a scheme for proving two classical results: first for $n=2$ H{\'{e}}lein's \cite{Hel90} regularity for harmonic maps into spheres, and then for $n>2$ Strzelecki's/Fuchs' \cite{Strz94,FuchsnharmSphere} regularity for $n$-harmonic maps into spheres. The $n=2$-case uses ideas developed in \cite{DR1dSphere,SNHarmS10} which treated the fractional harmonic maps into spheres. The case $n>2$ follows from the proof of Theorem~\ref{th:mainS}. 

The arguments presented here are more lengthy and seem more complicated than the beautiful proofs by H{\'{e}}lein and Strzelecki. On the other hand, they are robust enough under disturbances, and work in particular with with non-local operators.

\subsection*{The two-dimensional case}
For $n = 2$, a critical point of the energy $E_1(v) = \int |\nabla v|^2$, $v: \Omega \to \S^{N-1}$, satisfies
\begin{equation}\label{eq:PDE2D}
\lap u = u\ |\nabla u|^2 \quad \mbox{in $\Omega$}.
\end{equation}

The goal is to show that there is a $\tau < 1$, on all (small) balls $B_{4r} \subset \Omega$,
\begin{equation}\label{eq:goal2D}
 \int_{B_r} |\nabla u|^{2} \leq \tau \int_{B_{4r}} |\nabla u|^{2} + \emph{good terms}.
\end{equation}
Indeed, if we assume \eqref{eq:goal2D} and the ``good terms'' behave as their name suggests, crucially using that $\tau < 1$ by iteration\footnote{For this kind of iteration, and the following Morrey and Campanato spaces, we refer to \cite[Chapter III]{GiaquintaMI}, and for Sobolev imbedding to \cite{Adams75}.} we obtain a $\sigma \in (0,1)$ such that for any small ball $B_\rho$
\begin{equation}\label{eq:afteriteration}
 \int_{B_\rho} |\nabla u|^{2} \leq \rho^\sigma (E_1(u)+C).
\end{equation}
The estimate \eqref{eq:afteriteration} tells us that $\nabla u$ belongs to the Morrey space $\mathcal{M}^{\sigma,2}$ strictly smaller than $L^2 \equiv \mathcal{M}^{0,2}$. Without \eqref{eq:goal2D} from $\nabla u \in L^2$ Sobolev embedding implied only $u \in BMO = \mathcal{L}^{n,2}$, where $\mathcal{L}^{\lambda,p}$ are the Campanato spaces; now from $\nabla u \in \mathcal{M}^{\sigma,2}$ via Sobolev imbedding we infer that $u \in \mathcal{L}^{4\frac{1-\sigma}{2-\sigma},2} = C^{0,2\frac{1-\sigma}{2-\sigma}}$. That is, \eqref{eq:goal2D} implies H\"older regularity of $u$.

In order to obtain \eqref{eq:goal2D}, one might be tempted to just multiply \eqref{eq:PDE2D} with $u$ (up to a cutoff-functions) and integrate by parts, but this implies only
\[
 \int_{B_r} |\nabla u|^{2} \leq \vrac{u}_{L^\infty(B_{4r})}\ \int_{B_{4r}} |\nabla u|^{2} + \emph{good terms}
\]
at best. So the main idea which was used in \cite{DR1dSphere,SNHarmS10} is to split up $\nabla u$: Since $|u| \equiv 1$, for any $x \in \Omega$, $\alpha=1,2$
\[
 |\partial_\alpha u(x)| \aleq |u(x) \cdot \partial_\alpha u(x)| + \max_{o \perp u(x), |o|=1} |o \cdot \partial_\alpha u(x)|.
\]
On the other hand, it is not too difficult to show, that
\[
 \max_{o \perp u(x), |o|=1} |o \cdot \partial_\alpha u(x)| \aleq \max_{\omega} |u^i \omega_{ij} \partial_\alpha u(x)|,
\]
where the maximum is over all finitely many $\omega \in \R^{n \times n}$, $\omega_{ij} = -\omega_{ji} \in \{-1,0,1\}$. This can be seen as a consequence of  Lagrange’s
identity, also the proof is given in the appendix of \cite{DSpalphSphere}. That is, 
\begin{equation}\label{eq:nablaudecomp}
 |\partial_\alpha u(x)| \aleq |u(x) \cdot \partial_\alpha u(x)| + \max_{\omega} |u^i \omega_{ij} \partial_\alpha u(x)|.
\end{equation}
Now,
\[
 u(x) \cdot \partial_\alpha u(x) = \frac{1}{2} \partial_\alpha |u(x)|^2 = \frac{1}{2} \partial_\alpha 1 = 0.
\]
Thus, in order to obtain \eqref{eq:goal2D}, we need to show for an arbitrary but constant $\omega \in \R^{n \times n}$, $\omega_{ij} = -\omega_{ji} \in \{-1,0,1\}$,
\begin{equation}\label{eq:goal2Dtilde}
  \int_{B_r} |u^i \omega_{ij} \nabla u^j|^{2} \leq \tau \int_{B_{4r}} |\nabla u|^{2} + \emph{good terms}.
\end{equation}
But now, \eqref{eq:PDE2D} together with the antisymmetry of $\omega$ implies $\operatorname{div}(u^i \omega_{ij} \nabla u^j)  = 0$, and 
\[
 \operatorname{curl} (u^i \omega_{ij} \nabla u^j) = \omega_{ij}\ \nabla^\perp u^i \ \nabla u^j,
\]
where $\nabla^\perp = (-\partial_2,\partial_1)^T$. Since $\omega$ is constant, the right-hand side is a product of divergence-free and rotation-free vectorfields and by \cite{CLMS} belongs to the Hardy space. In particular it can be tested against BMO-functions (such as $u$), and by a Hodge-decomposition this implies
\[
  \int_{B_r} |u^i \omega_{ij} \nabla u^j|^{2} \leq C\ [u]_{BMO,B_{4r}} \int_{B_{4r}} |\nabla u|^{2} + \emph{good terms}.
\]
Now in contrast to $\vrac{u}_{L^\infty(B_{4r})}$, for small enough radii $r$ we know that $[u]_{BMO,B_{4r}}$ is small from which we can construct $\tau < 1$ in \eqref{eq:goal2Dtilde} and thus in particular we have \eqref{eq:goal2D}, which implies H\"older continuity.

\subsection*{The general case}
If $n > 2$, the Euler-Lagrange equations \eqref{eq:PDE2D} become
\begin{equation}\label{eq:PDEnD}
\operatorname{div}(|\nabla u|^{n-2}\nabla u) = u\ |\nabla u|^n \quad \mbox{in $\Omega$}.
\end{equation}
Following the rough idea of the 2D-case, one wants to show now that there is a $\tau < 1$, and on all (small) balls $B_{4r} \subset \Omega$,
\begin{equation}\label{eq:goalnDFirstTry}
 \int_{B_r} |\nabla u|^{n} \leq \tau \int_{B_{4r}} |\nabla u|^{n} + \emph{good terms},
\end{equation}
which again implies immediately H\"older regularity of $u$.

Now, one would decompose $|\nabla u|$ as in \eqref{eq:nablaudecomp}, and we would try show the existence of $\tau < 1$ such that 
\[
   \int_{B_r} \left ||\nabla u|^{n-2}\ u^i \omega_{ij} \nabla u^j \right|^{n'} \leq \tau \int_{B_{4r}} |\nabla u|^{n} + \emph{good terms}.
\]
In order to do that, one would compute that the divergence
\[
 \operatorname{div}(|\nabla u|^{n-2}\ u^i \omega_{ij} \nabla u^j) = 0,
\]
but when one computes the rotation a problem occurs, since in general for $n \neq 2$ it is not clear why it should 	be true that
\[
 \operatorname{curl}(|\nabla u|^{n-2}\ u^i \omega_{ij} \nabla u^j) \in \mathcal{H}^1.
\]
We conclude that it does not seem feasible to decompose $|\nabla u|$ in that way. 

Instead, one first shows, see Lemma~\ref{la:lefthandside}, that for the $\alpha$-th Riesz transform $\Rz_\alpha$,
\begin{equation}\label{eq:nablaubyRz}
\vrac{\nabla u}_{n,B_r}^n \aleq \int_{B_{2r}} \left |\Rz_\alpha [|\nabla u|^{n-2} \partial_\alpha u ]\right|^{n'} + \emph{good terms}.
\end{equation}
This is the crucial point that makes our argument work: trying to estimate $|\nabla u|^{n-2} \nabla u$ seems to inevitably lead to a rotation-problem, like the one that spoiled our first attempt above. Instead, we estimate $\Rz[|\nabla u|^{n-2} \nabla u]$, a term which looks more complicated, but where this rotation-problem does not appear:

We decompose as in \eqref{eq:nablaudecomp},
\begin{align}
 |\Rz_\alpha [|\nabla u|^{n-2}\ \partial_\alpha u ]| \aleq\ &|u^i  \Rz_\alpha [|\nabla u|^{n-2}\ \partial_\alpha u^i ]| \label{eq:thedecomp}\\
 &+ \max_{\omega} |u^i  \omega_{ij} \Rz_\alpha [|\nabla u|^{n-2}\ \partial_\alpha u^j ]|\nonumber.
\end{align}
We shall call the first term the orthogonal part (since $u(x)$ is orthogonal to the tangential space $T_{u(x)} \S^{N-1}$). It is now non-zero (in contrast to the $n=2$- case), but $u^i \partial_\alpha u^i = 0$ still implies
\begin{equation}\label{eq:uRzpartialuclassest}
 u^i  \Rz_\alpha [|\nabla u|^{n-2}\ \partial_\alpha u^i ] = \mathcal{C}(u^i,\Rz_\alpha)[|\nabla u|^{n-2}\ \partial_\alpha u^i],
\end{equation}
with the commutator
\begin{equation}\label{eq:def:CRWcommutator}
 \mathcal{C}(b,T)[v] = bT[v] - T[bv].
\end{equation}
Now employing the Coifman-Rochberg-Weiss Theorem \cite{CRW76}, we obtain
\begin{align*}
 &\vrac{u^i  \Rz_\alpha [|\nabla u|^{n-2}\ \partial_\alpha u^i ]}_{L^{n'}(B_{2r})} \\
 &\aleq [u]_{BMO,B_{3r}}\ \vrac{|\nabla u|^{n-2}\ \partial_\alpha u^i}_{L^{n'}(B_{3r})} + \emph{good terms}\\
 &\aleq [u]_{BMO,B_{3r}}\ \vrac{\nabla u}_{L^{n}(B_{3r})}^{n-1} + \emph{good terms}.
\end{align*}
The ``good terms'' stem from cut-off functions, and for the precise formulation we refer to Lemma~\ref{la:mainl}. Now, we can use that $[u]_{BMO}$ is small on small sets, and have for the orthogonal part, that for some $\tau < 1$,
\begin{equation}\label{eq:orthogonalpart}
 \vrac{u^i  \Rz_\alpha [|\nabla u|^{n-2}\ \partial_\alpha u^i ]}_{L^{n'}(B_{2r})} \\
 \leq  \frac{\tau}{2} \vrac{\nabla u}_{L^{n}(B_{3r})}^{n-1} + \emph{good terms}.
\end{equation}
It remains to estimate the ``tangential part'', and here we use the equation \eqref{eq:PDEnD}. Firstly, by Lemma~\ref{la:EstimatebyPDE}, for some $\varphi \in C_0^\infty(B_{2r})$, $\vrac{\nabla \varphi}_{n} \leq 1$,
\begin{align}
&\vrac{u^i  \omega_{ij} \Rz_\alpha [|\nabla u|^{n-2}\ \partial_\alpha u^j ]}_{n',B_{2r}} \label{eq:estimatebyPDEarg}\\
\aleq\quad  &\int_{\R^n} u^i  \omega_{ij} \Rz_\alpha [|\nabla u|^{n-2}\ \partial_\alpha u^j ]\ \laps{1} \varphi + \emph{good terms} \nonumber.
\end{align}
Now since $\Rz_\alpha \laps{1} = \partial_\alpha$, and using \eqref{eq:PDEnD},
\[
  \laps{1} \Rz_\alpha [|\nabla u|^{n-2}\ \partial_\alpha u^j ] = \operatorname{div}(|\nabla u|^{n-2} \nabla u^j)=|\nabla u|^n u^j,
\]
and consequently,
\begin{equation}\label{eq:nELapplied}
  \int_{\R^n} \varphi\ u^i  \omega_{ij} \laps{1} \Rz_\alpha [|\nabla u|^{n-2}\ \partial_\alpha u^j ] = 0.
\end{equation}
Moreover, by the antisymmetry of $\omega$,
\begin{equation}\label{eq:secondguyPDE}
  \omega_{ij} \Rz_\alpha [\laps{1} u]\ |\nabla u|^{n-2}\ \partial_\alpha u^j  \equiv 0.
\end{equation}
We recall the bi-commutator $H_s$, which measures how much $\laps{s}$ is away from having a product rule,
\begin{equation}\label{eq:defHs}
 H_{s}(a,b) = \laps{s} (ab) - a \laps{s}b - b \laps{s} a.
\end{equation}
From \eqref{eq:estimatebyPDEarg} we then have
\begin{align*}
&\vrac{u^i  \omega_{ij} \Rz_\alpha [|\nabla u|^{n-2}\ \partial_\alpha u^j ]}_{n',B_{2r}}\\
\overset{\eqref{eq:nELapplied}}{\aleq}\quad  &-\omega_{ij}  \int_{\R^n} H_1 (u^i,\varphi)\ \Rz_\alpha [|\nabla u|^{n-2}\ \partial_\alpha u^j ]\\
& - \omega_{ij}  \int_{\R^n} \laps{1} u^i \ \Rz_\alpha [|\nabla u|^{n-2}\ \partial_\alpha u^j ]\ \varphi\\
&+ \emph{good terms}\\
\overset{\eqref{eq:secondguyPDE}}{=}\quad  &\omega_{ij}  \int_{\R^n} H_1 (u^i,\varphi) \ \Rz_\alpha [|\nabla u|^{n-2}\ \partial_\alpha u^j ]\\
& + \omega_{ij}  \int_{\R^n} \laps{1} u^i \ \mathcal{C}(\Rz_\alpha,\varphi) [|\nabla u|^{n-2}\ \partial_\alpha u^j ]\\
&+ \emph{good terms}
\end{align*}
where $\mathcal{C}(\cdot,\cdot)[\cdot]$ is again the commutator defined in \eqref{eq:def:CRWcommutator}. The estimates on bi-commutators established in \cite{DR1dSphere,Sfracenergy} and the commutator estimates in \cite{CRW76}, imply 
\begin{align*}
&\vrac{u^i  \omega_{ij} \Rz_\alpha [|\nabla u|^{n-2}\ \partial_\alpha u^j ]}_{n',B_{2r}}\\
\aleq &\quad  \vrac{\laps{1} u}_{n,B_{4r}}\ \vrac{\nabla u}^{n-1}_{n,B_{4r}}\ \brac{\vrac{\laps{1} \varphi}_{n} + [\varphi]_{BMO}} 	+ \emph{good terms}.
\end{align*}
Since $\nabla u \in L^n$, so is $\laps{1} u$, and for small radii, we have
\[
\vrac{u^i  \omega_{ij} \Rz_\alpha [|\nabla u|^{n-2}\ \partial_\alpha u^j ]}_{n',B_{2r}} \leq \frac{\tau}{2} \vrac{\nabla u}^{n-1}_{n,B_{4r}} 	+ \emph{good terms}.
\]
This, \eqref{eq:orthogonalpart} and the decomposition \eqref{eq:thedecomp}, imply \eqref{eq:goalnDFirstTry}, which again implies regularity. \qed

%

\section{The Goal}\label{sec:notaton}
The analogue of the ``goal'' \eqref{eq:goal2D}, \eqref{eq:goalnDFirstTry} is the following Lemma, for which we need to introduce some notation: We fix some reference ball scale $B_R(x_0) \subset \Omega$. If $\chi_A$ is the characteristic function on $A$, we denote for $l \in \Z$,
\begin{equation}\label{eq:cutdef}
 \cut_{l} := \chi_{B_{2^lR}(x_0)}, \mbox{ and } \cutA_l := \cut_l - \cut_{l-1}.
\end{equation}

\begin{lemma}\label{la:mainl}
Let $p_s := \frac{n}{s}$. Assume that $u$ as in Theorem~\ref{th:mainS} or Theorem~\ref{th:mainSO}. Then for some $L_0 \in \Z$, $K_0 \in \mathbb{N}$, $\tau \in (0,1)$, $\sigma > 0$, for any $L \leq L_0$, $K \geq K_0$,
\[
 \vrac{\cut_{L} \laps{s} u}_{p_s}^{p_s} \leq \tau\ \|\cut_{L+K} \laps{s} u\|_{p_s}^{p_s} + C\ 2^{-K\sigma}\ \sum_{l=1}^\infty  2^{-l\sigma}\ \|\cut_{L+K+l}\laps{s}u \|_{p_s}^{p_s}.\\
\]
\end{lemma}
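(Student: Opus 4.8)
The plan is to establish the decomposition $|\laps{s} u| \aleq |u^i \Rz_\alpha^{-1}\partial_\alpha^s u^i| + \max_\omega |u^i \omega_{ij}\Rz_\alpha^{-1}\partial_\alpha^s u^j|$ (using $\Rz_\alpha \Rz_\alpha = -\id$ to recover $\laps{s}$ from $\nabla^s$, together with the pointwise Lagrange-identity estimate \eqref{eq:nablaudecomp} adapted to the sphere or to $SO(N)$), localize it against $\cut_L$, and raise to the $p_s$-th power; then estimate each of the two resulting pieces separately. For the first (``orthogonal'') piece, the identity $u^i \partial_\alpha^s u^i = 0$ never holds literally for $s\neq 1$, so instead one writes $u^i \Rz_\alpha[|\laps s u|^{p_s-2}\laps s u^i]$ (schematically, after absorbing $|\laps s u|^{p_s-2}$ as part of the tested function) as a commutator $\mathcal{C}(u^i,\Rz_\alpha)[\cdot]$ plus $\Rz_\alpha[u^i \cdot\, ]$-type terms that vanish by orthogonality, and invoke the Coifman--Rochberg--Weiss estimate to gain the factor $[u]_{BMO,B_{2^{L+1}R}}$. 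Choosing $L_0$ small enough (the reference ball small enough) makes this BMO seminorm as small as we like, which is where the $\tau<1$ is born.

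For the second (``tangential'') piece, I would test against a function $\varphi \in C_0^\infty(B_{2^{L+1}R})$ with $\vrac{\laps s \varphi}_{p_s'}\leq 1$ realizing (up to a constant) the $L^{p_s}$ norm, by duality of $\laps s$; then use the Euler--Lagrange equation. The key algebraic facts are, first, that the full term $\laps s \Rz_\alpha[\,\cdot\,]=\partial_\alpha^s \circ (\text{something})$ applied inside the integral and combined with the PDE produces a vanishing contribution (the analogue of \eqref{eq:nELapplied}), and second, that the ``raw'' term $\omega_{ij}\,\Rz_\alpha[\laps s u^i]\,|\laps s u|^{p_s-2}\partial_\alpha^s u^j$ vanishes identically by antisymmetry of $\omega$ (the analogue of \eqref{eq:secondguyPDE}); after moving $\laps s$ around via the bi-commutator $H_s$ from \eqref{eq:defHs} and the Riesz commutator $\mathcal{C}(\Rz_\alpha,\varphi)$, everything that remains carries a factor $\vrac{\laps s u}_{p_s,B_{2^{L+2}R}}$, which is small on small balls. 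This again contributes to $\tau$.

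The handling of the non-local tails is the part that requires the dyadic cutoffs $\cut_l$ and is what distinguishes this from the classical statement. Every operator here ($\Rz_\alpha$, $\laps s$, $\mathcal{C}$, $H_s$) is non-local, so after localizing one is left with error terms of the form $\cut_L \,T[(1-\cut_{L+K})\,v]$, which are not compactly controlled but decay in the distance between supports: one bounds them by $\sum_{l\geq 1} 2^{-(K+l)\sigma}\vrac{\cut_{L+K+l}\laps s u}_{p_s}^{p_s}$ using the standard kernel-decay / off-diagonal estimates for these operators (as in \cite{DR1dSphere,Sfracenergy,DSpalphSphere}). Collecting: the ``good terms'' of the heuristic become exactly the tail sum, the genuinely local leftover pieces all carry either $[u]_{BMO}$ or $\vrac{\laps s u}_{p_s}$ over a small ball and are absorbed into $\tau\,\vrac{\cut_{L+K}\laps s u}_{p_s}^{p_s}$ by choosing $L_0$ small and $K_0$ large, and the triangle inequality across the two pieces of \eqref{eq:thedecomp} closes the estimate.

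The main obstacle I expect is bookkeeping the interplay between the three scales $L$, $L+K$, $L+K+l$ uniformly — i.e. making sure the commutator and bi-commutator estimates, when one input is a cutoff supported far away, genuinely produce the geometric decay $2^{-l\sigma}$ with a $\sigma>0$ independent of $L,K,l$, and that the ``localization error'' from replacing $\varphi$'s implicit global extension by a compactly supported one does not reintroduce a term of size comparable to $\vrac{\cut_{L+K}\laps s u}_{p_s}^{p_s}$ without the small constant. The algebraic manipulations with the PDE (the analogues of \eqref{eq:nELapplied}–\eqref{eq:secondguyPDE}) are essentially forced once one has written the decomposition correctly; the analytic heart is these uniform off-diagonal commutator bounds, which is why the paper leans on \cite{CRW76} and the fractional-harmonic-map machinery rather than on anything specific to the $n$-harmonic setting.
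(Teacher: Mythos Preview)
Your outline has the commutator/bi-commutator machinery and the off-diagonal tail bookkeeping right, but it omits the step the paper flags as decisive: the ``left-hand side'' conversion of Lemma~\ref{la:lefthandside} (heuristically, equation~\eqref{eq:nablaubyRz}). You propose to apply the pointwise decomposition~\eqref{eq:nablaudecomp} directly to $\laps{s}u$ and raise to the $p_s$-th power. For the tangential piece this yields $\vrac{\cut_L\, u^i\omega_{ij}\laps{s}u^j}_{p_s}$, and after dualizing against $\laps{s}\varphi$ you arrive at $\int \laps{s}u^j\,\laps{s}(u^i\omega_{ij}\varphi)$ plus commutator terms --- but this is \emph{not} the Euler--Lagrange identity~\eqref{eq:EKeqn}, which carries the weight $|\nabla^s u|^{p_s-2}$. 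Your phrase ``schematically, after absorbing $|\laps{s} u|^{p_s-2}$ as part of the tested function'' names no mechanism by which the weight is placed \emph{inside} a Riesz transform so the PDE applies, and it is the wrong weight in any case: the energy here is $E_s=\int|\nabla^s u|^{p_s}$, not $\tilde{E}_s=\int|\laps{s}u|^{p_s}$, so the equation involves $|\nabla^s u|^{p_s-2}\partial_\alpha^s u$, never $|\laps{s}u|^{p_s-2}\laps{s}u$. What you have written is essentially the $p_s=2$ argument, and it does not close for $p_s\neq 2$.

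The paper's route is to \emph{first} prove, via Lemma~\ref{la:lefthandside} and a hole-filling step,
\[
 \vrac{\cut_{L}\laps{s} u}_{p_s}^{p_s} \leq C\,\|\cut_{L+K}\, \Rz_\beta [|\nabla^s u|^{p_s-2}\, \partial_\beta^s u] \|_{p_s'}^{p_s'} + \tau\, \|\cut_{L+K}\laps{s} u\|_{p_s}^{p_s} + \text{good terms}
\]
with $\tau<1$ already produced there, and only \emph{then} apply the decomposition~\eqref{eq:thedecomp} to $\Rz_\beta[|\nabla^s u|^{p_s-2}\partial_\beta^s u]$, which has the PDE structure built in. Section~\ref{s:classical} explains explicitly why the direct route you propose --- decomposing first and hoping the remaining piece is controlled --- fails once $p_s\neq 2$. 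After this conversion, the remainder of your outline matches the paper: Lemma~\ref{la:orthogonalpart} for the orthogonal piece, Lemma~\ref{la:EstimatebyPDE} followed by Lemma~\ref{la:tangentialpart} for the tangential piece, and the dyadic tail bounds for the nonlocal errors.
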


This Lemma implies Theorem~\ref{th:mainS}, Theorem~\ref{th:mainSO} by iteration and Sobolev imbedding, essentially as described in Section~\ref{s:classical}. For the details, we refer to, e.g., \cite{DR1dSphere} and also to the appendix in \cite{BPSknot12}.
\begin{proof}[Proof of Lemma~\ref{la:mainl}]
Before starting the proof, we set
\[
 \mbox{``good terms $(C,K,\sigma)$''} := C\ 2^{-K\sigma}\ \sum_{l=1}^\infty  2^{-l\sigma}\ \|\cut_{L+K+l}\laps{s}u \|_{p_s}^{p_s}.
\]
Note that for any $\eps > 0$ there is $K$ sufficiently large such that for any $\tilde{K} > K$,
\begin{align*}
 &\mbox{``good terms $(C,K,\sigma)$''}\\
 \leq\ &\eps \|\cut_{L+\tilde{K}}\laps{s}u \|_{p_s}^{p_s} + C\ 2^{-\tilde{K}\ \frac{K}{\tilde{K}}\sigma}\ \sum_{l=1}^\infty  2^{-l\sigma}\ \|\cut_{L+\tilde{K}+l}\laps{s}u \|_{p_s}^{p_s}\\
 =\ &\eps \|\cut_{L+\tilde{K}}\laps{s}u \|_{p_s}^{p_s} + \mbox{``good terms $(C,\tilde{K},\tilde{\sigma})$''},
\end{align*}
where $\tilde{\sigma} = \frac{K}{\tilde{K}} \sigma$. Consequently, for proving the claim, we don't need to care to much about the precise values of $K$, $C$, $\sigma$, as long as $K$ is sufficiently large.

Now we start the proof: Fix $\eps > 0$ to be determined later. By Lemma~\ref{la:lefthandside}, for some $\tau \in (0,1)$,
\begin{align}
 \vrac{\cut_{L}\laps{s} u}_{p_s}^{p_s}\nonumber &\leq 
 \tau\ \|\cut_{L+K}\laps{s} u\|_{p_s}^{p_s} + \mbox{``good terms''}\nonumber\\
 &\quad + C\ \|\cut_{L+K} \Rz_\beta [|\nabla^s u|^{p_s-2}\ \partial_\beta^s u] \|_{p_s'}^{p_s'}\label{eq:beforedecomp}.
\end{align}
Let us first concentrate on the sphere case: we use the decomposition \eqref{eq:thedecomp}, which is valid as long as $\supp \chi_{L+K} \subset \Omega$.
\[
 \|\cut_{L+K} \Rz_\beta [|\nabla^s u|^{p_s-2}\ \partial_\beta^s u] \|_{p_s'}^{p_s'}
\]
\[
 \aleq \|u^i \cut_{L+K} \Rz_\beta [|\nabla^s u|^{p_s-2}\ \partial_\beta^s u^i] \|_{p_s'}^{p_s'} + \max_{\omega} \|u^j \omega_{ij} \cut_{L+K} \Rz_\beta [|\nabla^s u|^{p_s-2}\ \partial_\beta^s u^j] \|_{p_s'}^{p_s'} 
\]
For the first term we use Lemma~\ref{la:orthogonalpart},
\[
\|u^i \cut_{L+K} \Rz_\beta [|\nabla^s u|^{p_s-2}\ \partial_\beta^s u^i] \|_{p_s'}^{p_s'}
\]
\[\aleq
\brac{\vrac{\cut_{L+2K} \laps{s} u}_{p_s} + 2^{-K\sigma}}^{p_s'}\ \vrac{\cut_{L+2K} \laps{s} u}_{p_s}^{p_s}.
\]
If $K \in \N$ is large enough, and $L \in \Z$ negative enough so that $\vrac{\chi_{L+2K} \laps{s} u}_{p_s}^{p_s'} < \eps$ by absolute continuity of the integral, and $2^{-K\sigma p_s'} < \eps$, we arrive at
 \begin{align*}
 \vrac{\cut_{L}\laps{s} u}_{p_s}^{p_s} &\leq 
 (\tau +C \eps)\ \|\cut_{L+2K}\laps{s} u\|_{p_s}^{p_s} + \mbox{``good terms''}\\
 &\quad + \max_{\omega} \|u^j \omega_{ij} \cut_{L+K} \Rz_\beta [|\nabla^s u|^{p_s-2}\ \partial_\beta^s u^j] \|_{p_s'}^{p_s'}.
\end{align*}
For the second term, we first use Lemma~\ref{la:EstimatebyPDE} and then Lemma~\ref{la:tangentialpart}, to obtain
\[
 \|u^j \omega_{ij} \cut_{L+K} \Rz_\beta [|\nabla^s u|^{p_s-2}\ \partial_\beta^s u^j] \|_{p_s'}^{p_s'} \aleq (\vrac{\cut_{L+2K}\laps{s} u}_{p_s}^{p_s'} + 2^{-\sigma K}) \vrac{\cut_{L+K}\laps{s} u}^{p_s}_{p_s} + \mbox{``good terms''}.
\]
Again, if $K$ is large enough $L+2K$ is small enough, this implies
\[
 \|u^j \omega_{ij} \cut_{L+K} \Rz_\beta [|\nabla^s u|^{p_s-2}\ \partial_\beta^s u^j] \|_{p_s'}^{p_s'} \aleq \eps\ \vrac{\cut_{L+K}\laps{s} u}^{p_s}_{p_s} + \mbox{``good terms''}.
\]
Thus, we finally arrive at
\begin{align*}
 \vrac{\cut_{L}\laps{s} u}_{p_s}^{p_s} &\leq 
 (\tau +C \eps)\ \|\cut_{L+2K}\laps{s} u\|_{p_s}^{p_s} + \mbox{``good terms''}.
\end{align*}

Since $C$ is a generic constant, depending on the dimension, $s$, and possibly $\vrac{\laps{s} u}_{p_s}$, if $\eps$ was chosen small enough such that $\tilde{\tau} := \tau + C\eps < 1$ this gives the claim with $\tilde{\tau}$ instead of $\tau$.

In the case of $SO(N)$, in \eqref{eq:beforedecomp} we use decomposition Proposition~\ref{pr:decompSO}. Then, for symmetric $\theta_{ij} = \theta_{ji} \in \{-1,0,1\}$ (i.e. the orthogonal part), we apply Lemma~\ref{la:orthogonalpartSO} to
\[
 \|\cut_{L+K} \theta_{ij} u_{ik} \Rz_\beta [|\nabla^s u|^{p_s-2}\ \partial_\beta^s u_{kj}] \|_{p_s'}^{p_s'},
\]
and Lemma~\ref{la:tangentialpartSO} to antisymmetric $\omega_{ij} = \omega_{ji} \in \{-1,0,1\}$ (i.e. the tangential part) to
\[
 \|\cut_{L+K} \omega_{ij} u_{ik} \Rz_\beta [|\nabla^s u|^{p_s-2}\ \partial_\beta^s u_{kj}] \|_{p_s'}^{p_s'},
\]
and obtain exactly the same estimates.
\end{proof}

\section{Estimates on the Left-hand side}
Here, we prove the estimate that leads to \eqref{eq:nablaubyRz}. It is an extension and localization of the following simple argument
\begin{align*}
 \vrac{\nabla u}_{n}^n &= \int |\nabla u|^{n-2} \nabla u\cdot \nabla u = \int |\nabla u|^{n-2} \partial_\alpha u\ \Rz_\alpha \laps{1} u\\
 &=- \int \Rz_\alpha [|\nabla u|^{n-2} \partial_\alpha u]\ \laps{1} u \aleq \vrac{\Rz_\alpha [|\nabla u|^{n-2} \partial_\alpha u]}_{n'}\ \vrac{\laps{1} u}_{n'}.
\end{align*}
Now one uses that $\vrac{\laps{1} u}_{p} \approx \vrac{\nabla u}_p$, and obtains an estimate similar to \eqref{eq:nablaubyRz}. 

Although this arguments seems trivial, it is related to one of the main problems in the $n$-harmonic map case, or related $n$-laplace PDE's, e.g. such as \cite{DM10}:

By the Iwaniec stability result \cite{Iw92}, one can prove an estimate like \eqref{eq:nablaubyRz} also for $p \approx n$, i.e.
\begin{align*}
 \vrac{\nabla u}_{p}^{p} \aleq \vrac{\Rz_\alpha [|\nabla u|^{n-2} \partial_\alpha u]}^{\frac{p}{p-1}}_{\frac{p}{2p-n-1}},
\end{align*}
However, to our knowledge, it is not know whether
\[
 \vrac{\nabla u}_{(n,\infty)}^{n} \aleq \vrac{\Rz_\alpha [|\nabla u|^{n-2} \partial_\alpha u]}^{n'}_{(n',\infty)},
\]
where $L^{(n,\infty)}$ is the weak $L^n$-space. For $n=2$ this estimate is elementary, and it is important for some of the proofs of harmonic maps.
  
\begin{lemma}[Left-Hand side]\label{la:lefthandside}
Recall \eqref{eq:cutdef}. There exists a constant $T_1$, a $\tau \in (0,1)$, $\sigma > 0$, such for any $T \geq T_1$, and any $u$,
\begin{align*}
 \vrac{\cut_{S}\laps{s} u}_{p_s}^{p_s} &\leq 
 C\ \|\cut_{S+T} \Rz_\beta [|\nabla^s u|^{p_s-2}\ \partial_\beta^s u] \|_{p_s'}^{p_s'} \\
 & \quad + \tau\ \|\cut_{S+T}\laps{s} u\|_{p_s}^{p_s}\\
%
&\quad + C\ 2^{-T\sigma}\ \sum_{l=1}^\infty  2^{-l\sigma}\ \|\cut_{S+T+l}\laps{s}u \|_{p_s}^{p_s}.\\
%
\end{align*}
\end{lemma}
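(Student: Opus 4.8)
The plan is to localize the trivial duality computation displayed just before the lemma, controlling all the commutator-type errors generated by cutting off with $\cut_S$. First I would write, using that $\Rz_\alpha \laps{1} = \partial_\alpha$ and the fact that $\nabla^s = (\Rz_1 \laps{s}, \ldots, \Rz_n \laps{s})$, the pointwise identity $\partial_\beta^s u = \Rz_\beta \laps{s} u$, so that
\[
 \vrac{\cut_S \laps{s} u}_{p_s}^{p_s} = \int |\nabla^s u|^{p_s - 2} \cut_S \partial_\beta^s u \ \Rz_\beta [\laps{s} u] \cdot (\text{a factor comparing } |\nabla^s u| \text{ and } |\laps{s} u|),
\]
the point being that $|\nabla^s u| \aeq |\laps{s} u|$ pointwise in $L^{p_s}$ up to Riesz-transform bounds, so $\vrac{\cut_S \laps{s} u}_{p_s}^{p_s} \aleq \int \cut_S |\nabla^s u|^{p_s-2} \partial_\beta^s u\ \Rz_\beta[\laps{s} u]$. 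Then I would move $\cut_S$ and the Riesz transform around: $\int \cut_S |\nabla^s u|^{p_s-2}\partial_\beta^s u\ \Rz_\beta[\laps{s} u] = - \int \Rz_\beta[\cut_S |\nabla^s u|^{p_s-2}\partial_\beta^s u]\ \laps{s} u$, and then commute $\Rz_\beta$ past the cutoff $\cut_S$, writing $\Rz_\beta[\cut_S \cdot] = \cut_S \Rz_\beta[\cdot] + \mathcal{C}(\Rz_\beta, \cut_S)[\cdot]$ with $\mathcal{C}$ as in \eqref{eq:def:CRWcommutator}, and further insert a larger cutoff $\cut_{S+T}$ into $|\nabla^s u|^{p_s-2}\partial_\beta^s u$. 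The leading term becomes, by Hölder with exponents $p_s'$ and $p_s$,
\[
 \Big| \int \cut_S \Rz_\beta[\cut_{S+T}|\nabla^s u|^{p_s-2}\partial_\beta^s u]\ \laps{s} u\Big| \aleq \vrac{\cut_{S+T}\Rz_\beta[|\nabla^s u|^{p_s-2}\partial_\beta^s u]}_{p_s'} \vrac{\cut_S \laps{s} u}_{p_s},
\]
modulo a commutator $\mathcal{C}(\Rz_\beta, \cut_{S+T})$ error, and this should be absorbed/rearranged into the stated form — note $\vrac{\cut_S\laps{s}u}_{p_s}^{p_s-1}\cdot\vrac{\cdot}_{p_s'}$ matched against $\vrac{\cut_S\laps{s}u}_{p_s}^{p_s}$ after Young's inequality, giving the first right-hand term with a harmless constant $C$.

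The substantive work is the \emph{tail estimate}: the commutator terms $\mathcal{C}(\Rz_\beta, \cut_S)[\cdot]$ and $\mathcal{C}(\Rz_\beta, \cut_{S+T})[\cdot]$, together with the contribution of $\laps{s} u$ on the complement of $B_{2^S R}$ that is picked up because $\laps{s}$ and $\nabla^s$ are nonlocal, must be shown to contribute either a term $\tau \vrac{\cut_{S+T}\laps{s}u}_{p_s}^{p_s}$ with $\tau<1$, or a ``good terms'' tail $C 2^{-T\sigma}\sum_l 2^{-l\sigma}\vrac{\cut_{S+T+l}\laps{s}u}_{p_s}^{p_s}$. The mechanism here is standard for these fractional commutators: when $x$ lies in the support region of one cutoff and $y$ in a far annulus $B_{2^{S+T+l}R}\setminus B_{2^{S+T+l-1}R}$, the kernel $|x-y|^{-n-s}$ (or $|x-y|^{-n}$ for the Riesz kernel against the difference of characteristic functions) gives a factor $2^{-(T+l)s}$ or $2^{-(T+l)}$, which after summing a geometric series in $l$ yields exactly the $2^{-T\sigma}\sum 2^{-l\sigma}$ structure with $\sigma$ a fixed small positive exponent (e.g. $\sigma = s$, or $\min(s, \text{something})$). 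I would invoke here the commutator/bi-commutator estimates from \cite{DR1dSphere, Sfracenergy, CRW76} to handle the genuinely nonlocal pieces cleanly, rather than estimating kernels by hand; the cutoff-commutator pieces are more elementary and reduce to the annular-decay computation just described.

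The main obstacle I anticipate is \emph{bookkeeping the cutoff scales consistently}: one must choose the nesting $S \subset S+T$ (and the intermediate cutoff appearing inside the Riesz transform) so that (i) all supports stay inside $\Omega$, which is why $T \geq T_1$ is required — $T_1$ large enough that $B_{2^{S+T}R}$ is still controlled, or rather small enough radii; wait, here $S$ is taken negative so $2^{S+T}R$ is small — (ii) the commutator between $\Rz_\beta$ and $\cut_{S+T}$ does not reintroduce a term at scale $S+T$ with a bad constant, forcing us to push to $S+T+l$ and pay the geometric decay; and (iii) the factor $\tau<1$ genuinely comes out less than one, which here is \emph{not} a smallness-of-BMO argument (that happens later, in Lemma~\ref{la:mainl}) but rather just the arithmetic of Young's inequality applied to $ab \leq \frac{1}{p_s'}\eta^{-p_s'} a^{p_s'} + \frac{1}{p_s}\eta^{p_s} b^{p_s}$ with $\eta$ tuned, since the $\vrac{\cut_S\laps{s}u}_{p_s}$ we want to absorb sits on the left. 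So $\tau$ can be taken as close to, but below, $1$, and the constant $C$ in front of the Riesz term degenerates as $\tau \to 1$, which is fine. Once these choices are fixed the estimate is a matter of collecting the pieces.
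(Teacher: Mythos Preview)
Your overall shape is close to the paper's argument, but there is a genuine gap in point (iii): the mechanism you describe for obtaining $\tau<1$ is not the one that actually works, and the correct one --- Widman-type hole-filling --- is absent from your sketch.

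Trace your own plan: after inserting the larger cutoff $\cut_{S+T}$ into $|\nabla^s u|^{p_s-2}\partial_\beta^s u$, the complementary piece is $\Rz_\beta\big[(1-\cut_{S+T})\,|\nabla^s u|^{p_s-2}\partial_\beta^s u\big]$, which indeed decays geometrically on the annuli $\cutA_{S+T+l}$. But you have also silently discarded the piece supported on the \emph{thick} annulus $B_{2^{S+T}R}\setminus B_{2^S R}$, namely $(\cut_{S+T}-\cut_S)\,|\nabla^s u|^{p_s-2}\partial_\beta^s u$. This region carries no smallness: the kernel has no separation of scales to exploit, and the Coifman--Rochberg--Weiss commutator estimate gives you only $[\cut_S]_{BMO}\approx 1$. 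After H\"older and Young this produces on the right-hand side a term of the form $C_\delta\,\|(\cut_{S+T}-\cut_S)\nabla^s u\|_{p_s}^{p_s}$ with a \emph{large} constant $C_\delta$. Your proposed fix --- absorb $\|\cut_S\laps{s}u\|_{p_s}$ into the left via Young --- deals with a different term and does not touch this annular contribution at scale $S+T$. The paper's resolution is hole-filling: one adds $C_\delta\,\|\cut_S\laps{s}u\|_{p_s}^{p_s}$ to both sides, turning the annular term into $C_\delta\,\|\cut_{S+T}\laps{s}u\|_{p_s}^{p_s}$, then divides through by $(1+C_\delta)$, so that the coefficient becomes $\frac{C_\delta+\text{(small)}}{1+C_\delta}<1$ once $\eps,\delta$ and $T$ are tuned. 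That is where $\tau<1$ comes from.

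A second, smaller issue: $|\nabla^s u|$ and $|\laps{s}u|$ are \emph{not} pointwise comparable (the Riesz transforms are not local), so your first displayed line is not an identity. The paper sidesteps this by starting from $\|\cut_0\nabla^s u\|_{p_s}^{p_s}$, which \emph{is} exactly $\int \cut_0|\nabla^s u|^{p_s-2}\partial_\beta^s u\,\partial_\beta^s u$, running the whole argument in $\nabla^s$, and only at the very end converting to $\|\cut_{-S}\laps{s}u\|_{p_s}$ via a localized Riesz comparison (Lemma~\ref{la:lapstoRieszII}); note the shift in scale this forces.
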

\begin{proof}
In order to reduce the number of indices a little bit, we assume $S = 0$. This is fine, since all the arguments in the following can be shifted by $+S$. We follow the argument presented at the beginning of this section, only taking care about localization. Then we have,
\begin{align*}
 \vrac{\cut_0\ \nabla^s u}_{p_s}^{p_s} 
 &= \int_{\R^n} |\nabla^s u|^{p_s-2}\ \partial_\beta^s u^i\cdot \Rz_\beta [\cut_{K}\laps{s} u^i]\\
 &\quad - \int (\cut_{K+L} - \cut_0)\ |\nabla^s u|^{p_s-2}\ \partial_\beta^s u^i\cdot \Rz_\beta [\cut_{K}\laps{s} u^i]\\
 &\quad - \sum_{l=K+L+1}^\infty \int \cutA_{l}\ |\nabla^s u|^{p_s-2}\ \partial_\beta^s u^i\cdot \Rz_\beta [\cut_{K}\laps{s} u^i]\\
 &\quad + \sum_{k =K+1}^\infty \int_{\R^n} \cut_0 |\nabla^s u|^{p_s-2}\ \partial_\beta^s u^i\cdot \Rz_\beta [\cutA_{k}\laps{s} u^i]\\
 &=: I - II - III + IV.
\end{align*}
The first term $I$ is exactly the part we want. Young's inequality and partial integration of $\Rz_\beta$ implies for any $\eps > 0$,
\[
 I \leq \frac{C}{\eps^{p_s'}} \|\cut_K\  \Rz_\beta [|\nabla^s u|^{p_s-2}\ \partial_\beta^s u^i] \|_{p_s'}^{p_s'} +  \eps^{p_s} \| \cut_K\  \laps{s} u^i \|_{p_s}^{p_s}.
\]
Next for any $\delta > 0$,
\[
II \leq \frac{C}{\delta^{p_s'}} \vrac{(\cut_{K+L} - \cut_0) |\nabla^s u|}_{p_s}^{p_s} + \delta^{p_s} \vrac{\cut_K\ \laps{s} u^i}_{p_s}^{p_s}.
\]
Then we apply Lemma~\ref{la:lapstoRiesz}, and have for any $M \geq 2$,
\begin{align*}
II 
 &\leq \frac{C}{\delta^{p_s'}} \|(\cut_{K-L+M}-\cut_{-M})\ \laps{s} u \|_{p_s}^{p_s} \\
 & \quad + \frac{C}{\delta^{p_s'}} \sum_{k=1}^\infty 2^{-(M+k)\frac{n}{p}}\ \|\cutA_{K-L+M+k}\ \laps{s} u\|_{p_s}^{p_s} \\
 & \quad + 2^{(K-L)n-Mn(p_s-1)}\ \frac{C}{\delta^{p_s'}}\ \|\cut_{-M}\ \laps{s} u\|_{p_s}^{p_s}\\
 & + \delta^{p_s} \vrac{\cut_K\ \laps{s} u^i}_{p_s}^{p_s}.
\end{align*}
Now we treat $III$. Applying Lemma~\ref{la:RieszlapsII}, and Lemma~\ref{la:local},
\begin{align*}	
 III & \aleq \sum_{l=K+L+1}^\infty \vrac{\cutA_{l} \nabla^s u}_{p_s}^{p_s-1} \vrac{\cutA_{l}\Rz_\beta [\cut_{K}\laps{s} u^i]}_{p_s}\\
 & \aleq \sum_{l=K+L+1}^\infty \left (\|\cut_{l+M} \laps{s}u \|_{p_s} +  \sum_{k=1}^\infty 2^{-\frac{n}{p}(M+k)} \|\cutA_{l+M+k} \laps{s} u \|_{p_s} \right )^{p_s-1}\ 2^{(K-l)\frac{n}{p_s'}}\ \vrac{\cut_{K}\laps{s} u^i}_{p_s}\\
 & \aleq \sum_{l=K+L+1}^\infty \left (\|\cut_{l+M} \laps{s}u \|_{p_s}^{p_s-1} +  2^{-M\frac{n}{p'}}\sum_{k=1}^\infty 2^{-\frac{n}{p}(k)\min\{(p_{s}-1),1\}} \|\cutA_{l+M+k} \laps{s} u \|_{p_s}^{p_s-1} \right )\ 2^{(K-l)\frac{n}{p_s'}}\ \vrac{\cut_{K}\laps{s} u^i}_{p_s}\\
 &\aleq \sum_{l=K+L+1}^\infty  2^{(K-l)\frac{n}{p_s'}}\ \|\cut_{l+M} \laps{s}u \|_{p_s}^{p_s}\\
 & \quad + \begin{cases} 
 \sum_{i=1}^\infty  \ 2^{-\frac{n}{p_s'}(L+M+1+i)}\  
  |i|\ \|\cutA_{K+L+M+1+i} \laps{s} u \|_{p_s}^{p_s}  \quad &p_s \leq 2,\\
 \sum_{i=1}^\infty  2^{-\frac{n}{p_s'}M} \ 2^{\frac{n}{p_s'}K}\ 2^{-\frac{n}{p_s}i}\  
  \ \|\cutA_{K+L+M+1+i} \laps{s} u \|_{p_s}^{p_s}  \quad &p_s > 2.
	   \end{cases}
 \end{align*}
Finally, for any $\gamma > 0$,
\begin{align*}
 IV
 \aleq &\vrac{\cut_0 \nabla^s u}_{p_s}^{p_s-1}   \sum_{k =K+1}^\infty\ 2^{-k\frac{n}{p}}\ \vrac{\cut_{k}\laps{s} u^i}_{p_s}\\
 \aleq &\gamma^{p_s'} \vrac{\cut_0 \nabla^s u}_{p_s}^{p_s} + \gamma^{-p_s} \ 2^{-K\frac{n}{p_s'}}\ \sum_{k =K+1}^\infty\ 2^{-k\frac{n}{p_s}}\ \vrac{\cut_{k}\laps{s} u}_{p_s}^{p_s}.
\end{align*}
Together, we arrive for some constant $C$ depending only on $p_s$ and the dimensions, for any $\eps,\delta,\gamma \in (0,1)$, and for any $K,L,M \in \mathbb{N}$, and $\geq 2$
\[
\vrac{\cut_0\ \nabla^s u}_{p_s}^{p_s} \leq 
\]
\[
\frac{C}{\eps^{p_s'}} \|\cut_K\  \Rz_\beta [|\nabla^s u|^{p_s-2}\ \partial_\beta^s u^i] \|_{p_s'}^{p_s'} +  \eps^{p_s} \| \cut_K\  \laps{s} u^i \|_{p_s}^{p_s}
\]
\[
+\frac{C}{\delta^{p_s'}} \Big (\|(\cut_{K-L+M}-\cut_{-M})\ \laps{s} u \|_{p_s}^{p_s}
\]
\[
 + \sum_{k=1}^\infty 2^{-(M+k)\frac{n}{p}}\ \|\cutA_{K-L+M+k}\ \laps{s} u\|_{p_s}^{p_s} \\
\]
\[
 + 2^{(K-L)n-Mn(p_s-1)}\  \|\cut_{-M}\ \laps{s} u\|_{p_s}^{p_s}\\
 \]
 \[
 + \delta^{p_s} \vrac{\cut_K\ \laps{s} u^i}_{p_s}^{p_s}\\
 \]
\[
+C\sum_{l=K+L+1}^\infty  2^{(K-l)\frac{n}{p_s'}}\ \|\cut_{l+M} \laps{s}u \|_{p_s}^{p_s}
\]
\[
+C\begin{cases} 
 \sum_{i=1}^\infty  \ 2^{-\frac{n}{p_s'}(L+M+1+i)}\  
  |i|\ \|\cutA_{K+L+M+1+i} \laps{s} u \|_{p_s}^{p_s}  \quad &p_s \leq 2,\\
 \sum_{i=1}^\infty  2^{-\frac{n}{p_s'}M} \ 2^{\frac{n}{p_s'}K}\ 2^{-\frac{n}{p_s}i}\  
  \ \|\cutA_{K+L+M+1+i} \laps{s} u \|_{p_s}^{p_s}  \quad &p_s > 2.
	   \end{cases}\\
\]
\[
+C\gamma^{p_s'} \vrac{\cut_0 \nabla^s u}_{p_s}^{p_s} + C\gamma^{-p_s} \ 2^{-K\frac{n}{p_s'}}\ \sum_{k =K+1}^\infty\ 2^{-k\frac{n}{p_s}}\ \vrac{\cut_{k}\laps{s} u^i}_{p_s}^{p_s}.
\]

Taking $\gamma > 0$ small enough, we can absorb the $\gamma^{p_s'}$-term. Next, take $M := 2K$, $L =2K$. Then,
\begin{align*}
\vrac{\cut_0\ \nabla^s u}_{p_s}^{p_s} &\leq 
\frac{C}{\eps^{p_s'}} \|\cut_K\  \Rz_\beta [|\nabla^s u|^{p_s-2}\ \partial_\beta^s u^i] \|_{p_s'}^{p_s'} \\
&\quad +\frac{C}{\delta^{p_s'}} \ \|(\cut_{K}-\cut_{-2K})\ \laps{s} u \|_{p_s}^{p_s} \\
 & \quad + C\left (\frac{1}{\delta^{p_s'}}  2^{-K(n+2p_s-2)}\ +\delta^{p_s} +\eps^{p_s} \right )  \|\cut_{K}\ \laps{s} u\|_{p_s}^{p_s}\\
%
&\quad +C\ \left (1+\delta^{-p_s'}+\gamma^{-p_s}\right )\ 2^{-K\sigma_1}\ \sum_{l=1}^\infty  2^{-l\sigma_2}\ \|\cut_{5K+l} \laps{s}u \|_{p_s}^{p_s}\\
\end{align*}
Now, for some $S \in \mathbb{N}$ we convert the left-hand side into a $\laps{s}$-term: By Lemma~\ref{la:lapstoRieszII}
\begin{align*}
 \vrac{\cut_{-S} \laps{s} u}_{p_s} &\aleq \|\cut_0 \nabla^s u\|_{p} + \sum_{l=1}^\infty l\ 2^{-\frac{n}{p}(S+l)}\|\cutA_{l} \laps{s} u \|_{p_s}\\
\end{align*}
Together with the above estimate we arrive at,
\begin{align*}
\vrac{\cut_{-S} \laps{s} u}_{p_s}^{p_s} &\leq 
\frac{C}{\eps^{p_s'}} \|\cut_K\  \Rz_\beta [|\nabla^s u|^{p_s-2}\ \partial_\beta^s u^i] \|_{p_s'}^{p_s'} \\
&\quad +\frac{C}{\delta^{p_s'}} \ \|(\cut_{K}-\cut_{-2K})\ \laps{s} u \|_{p_s}^{p_s} \\
 & \quad + C\left (\frac{1}{\delta^{p_s'}}  2^{-K\sigma_1}\ +\delta^{p_s} +\eps^{p_s} \right )  \|\cut_{K}\ \laps{s} u\|_{p_s}^{p_s}\\
%
&\quad +C\ (\left (1+\delta^{-p_s'}+\gamma^{-p_s}\right )\ 2^{-K\sigma_1}+2^{-S\sigma_1})\ \sum_{l=1}^\infty  2^{-l\sigma_2}\ \|\cut_{5K+l} \laps{s}u \|_{p_s}^{p_s}.\\
%
\end{align*}
The proof of the above estimate did not depend at all on the scale, so we can shift all indices by $+S$. Setting $S:=2K$, for possibly smaller $\sigma_1$, $\sigma_2$, for any $T \in \mathbb{N}$, $T \geq T_0$ for some uniform $T_0 > 0$, we have shown
\begin{align*}
\vrac{\cut_{0} \laps{s} u}_{p_s}^{p_s} &\leq 
\frac{C}{\eps^{p_s'}} \|\cut_{T}\  \Rz_\beta [|\nabla^s u|^{p_s-2}\ \partial_\beta^s u^i] \|_{p_s'}^{p_s'} \\
&\quad +\frac{C}{\delta^{p_s'}} \ \|(\cut_{T}-\cut_{0})\ \laps{s} u \|_{p_s}^{p_s} \\
 & \quad + C\left (\frac{1}{\delta^{p_s'}}  2^{-T\sigma_1}\ +\delta^{p_s} +\eps^{p_s} \right )  \|\cut_{T}\ \laps{s} u\|_{p_s}^{p_s}\\
%
&\quad +C\ \left (1+\delta^{-p_s'}+\gamma^{-p_s}\right )\ 2^{-T\sigma_1}\ \sum_{l=1}^\infty  2^{-l\sigma_2}\ \|\cut_{T+l} \laps{s}u \|_{p_s}^{p_s}.\\
%
\end{align*}
Adding $\frac{C}{\delta^{p_s'}} \ \|\cut_{0}\ \laps{s} u \|_{p_s}^{p_s}$ on both sides and dividing by $(1+\frac{C}{\delta^{p_s'}})$, we have
\begin{align*}
\vrac{\cut_{0} \laps{s} u}_{p_s}^{p_s} &\leq 
(1+\frac{C}{\delta^{p_s'}})^{-1} \frac{C}{\eps^{p_s'}} \|\cut_{T}\  \Rz_\beta [|\nabla^s u|^{p_s-2}\ \partial_\beta^s u^i] \|_{p_s'}^{p_s'} \\
 & \quad + (1+\frac{C}{\delta^{p_s'}})^{-1} \left (\frac{C}{\delta^{p_s'}} + C\left (\frac{1}{\delta^{p_s'}}  2^{-T\sigma_1}\ +\delta^{p_s} +\eps^{p_s} \right )  \right)\ \|\cut_{T}\ \laps{s} u\|_{p_s}^{p_s}\\
%
&\quad +(1+\frac{C}{\delta^{p_s'}})^{-1} C\ \left (1+\delta^{-p_s'}+\gamma^{-p_s}\right )\ 2^{-T\sigma_1}\ \sum_{l=1}^\infty  2^{-l\sigma_2}\ \|\cut_{T+l} \laps{s}u \|_{p_s}^{p_s}.\\
%
\end{align*}
Now we choose $\eps$, $\delta$, and then $T_1 \geq T_0$ such that for any $T \geq T_1$,
\[
 C\left (\frac{1}{\delta^{p_s'}}  2^{-T\sigma_1}\ +\delta^{p_s} +\eps^{p_s} \right ) < 1.
\]
Then for some $\tau < 1$, for any $T \geq T_1$,
\begin{align*}
\vrac{\cut_{0} \laps{s} u}_{p_s}^{p_s} &\leq 
\tilde{C}\ \|\cut_{T}\  \Rz_\beta [|\nabla^s u|^{p_s-2}\ \partial_\beta^s u^i] \|_{p_s'}^{p_s'} \\
 & \quad + \tau\ \|\cut_{T}\ \laps{s} u\|_{p_s}^{p_s}\\
%
&\quad +\tilde{C} \ 2^{-T\sigma_1}\ \sum_{l=1}^\infty  2^{-l\sigma_2}\ \|\cut_{T+l} \laps{s}u \|_{p_s}^{p_s}.\\
%
\end{align*}
\end{proof}

\section{Right-hand side I: orthogonal part}
The following lemma is the fractional analogue of the argument in \eqref{eq:uRzpartialuclassest}:
\begin{lemma}\label{la:orthogonalpart}
For any $K \in \Z$, $L \in \N$, $L \geq L_0$, and any $u$ the following holds:
If $\chi_{2L} |u| \equiv 1$, then for some uniform $\sigma > 0$,
\begin{align*}
&\|\cut_{K} u^i\ \Rz_\beta [|\nabla^s u|^{p_s-2}\ \partial_\beta^s u^i] \|_{p_s'}\\
\aleq& \brac{\vrac{\cut_{K+L} \laps{s} u}_{p_s} + 2^{-L\sigma}}\ \vrac{\cut_{K+L} \laps{s} u}_{p_s}^{p_s-1}\\
&+ (1+\vrac{\laps{s} u}_{p_s})\ \sum_{k=1}^\infty 2^{-(L+k)\sigma} \vrac{\cut_{K+L+k} \laps{s} u}_{p_s}^{p_s-1}.
\end{align*}
\end{lemma}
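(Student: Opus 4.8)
The plan is to exploit the algebraic identity $u^i \partial_\beta^s u^i$ does not vanish pointwise (unlike in the classical $n=2$ case), but that $u^i \Rz_\beta[\ldots\partial_\beta^s u^i]$ can be re-expressed as a commutator plus controllable remainders, so that the Coifman--Rochberg--Weiss theorem applies and produces the BMO-seminorm of $u$ as a gain. Concretely, I would first write
\[
 \cut_K u^i \Rz_\beta[|\nabla^s u|^{p_s-2}\partial_\beta^s u^i] = \cut_K\, \mathcal{C}(u^i,\Rz_\beta)[|\nabla^s u|^{p_s-2}\partial_\beta^s u^i] + \cut_K \Rz_\beta[u^i |\nabla^s u|^{p_s-2}\partial_\beta^s u^i],
\]
using the definition \eqref{eq:def:CRWcommutator} of $\mathcal{C}$. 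The point is that $u^i \partial_\beta^s u^i$ is \emph{not} the derivative of $|u|^2/2$ in the fractional setting, so the second term is no longer literally zero; instead $u^i \partial_\beta^s u^i = -\tfrac12 \Rz_\beta H_s(u^i,u^i) + (\text{term with } \Rz_\beta \laps{s}(|u|^2))$, and since $\chi_{2L}|u|\equiv 1$ the genuinely non-local piece $\laps{s}(|u|^2)$ is supported away from $B_{2^{K+L}R}$ after multiplication by an appropriate cutoff, hence contributes only ``tail'' terms of the form $2^{-(L+k)\sigma}\|\cut_{K+L+k}\laps{s}u\|_{p_s}^{p_s-1}$. The bi-commutator $H_s$ is estimated by the bounds from \cite{DR1dSphere,Sfracenergy} cited around \eqref{eq:defHs}.

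The second main step is the estimate of the commutator term itself. By the Coifman--Rochberg--Weiss commutator theorem \cite{CRW76}, $\|\mathcal{C}(b,\Rz_\beta)[f]\|_{p} \aleq [b]_{BMO}\,\|f\|_p$; applying this with $b = u^i$ and $f = |\nabla^s u|^{p_s-2}\partial_\beta^s u^i$ on $\R^n$, together with the pointwise bound $|\nabla^s u|^{p_s-2}|\partial_\beta^s u| \aleq |\laps{s}u|^{p_s-1}$ (Riesz transforms being bounded on $L^{p_s}$), gives a factor $\|\laps{s}u\|_{p_s}^{p_s-1}$. The subtlety is localization: the cutoff $\cut_K$ on the outside does not commute with $\Rz_\beta$, so I would insert $\cut_{K+L}$ inside, split $f = \cut_{K+L} f + (1-\cut_{K+L})f$, handle the local piece by CRW applied to the localized BMO seminorm $[u]_{BMO,B_{2^{K+L}R}}$ — which by the choice of scale is comparable to $\|\cut_{K+L}\laps{s}u\|_{p_s} + 2^{-L\sigma}$ via a Poincaré-type / Sobolev argument for the fractional gradient — and control the non-local piece $\Rz_\beta[(1-\cut_{K+L})f]$ restricted to $\supp\cut_K$ by the kernel decay of $\Rz_\beta$, which produces the geometric tail $\sum_k 2^{-(L+k)\sigma}\|\cut_{K+L+k}\laps{s}u\|_{p_s}^{p_s-1}$ together with the harmless prefactor $(1+\|\laps{s}u\|_{p_s})$.

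The hard part will be the bookkeeping of the localization: making precise that $[u]_{BMO,B_{2^{K+L}R}} \aleq \|\cut_{K+L}\laps{s}u\|_{p_s} + 2^{-L\sigma}$ (the $2^{-L\sigma}$ absorbing the contribution to the BMO oscillation coming from the far-away behaviour of $u$ through the nonlocal representation $u = \lapms{s}\laps{s}u$), and tracking that every time a cutoff is moved past a Riesz transform or past $H_s$ one only pays geometrically decaying tail terms with an extra summability index. None of these are deep — they are the standard ``three-commutator / kernel-tail'' estimates used throughout \cite{DR1dSphere,SNHarmS10,Sfracenergy} — but assembling them so that the two stated error terms come out in exactly the claimed form (the first with the full $\|\cut_{K+L}\laps{s}u\|_{p_s}^{p_s-1}$ factor, the second as a convergent series) requires care. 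Everything else is a direct application of the boundedness of $\Rz_\beta$ on $L^{p_s}$, Hölder's inequality with exponents $p_s$ and $p_s/(p_s-1)=p_s'$, and the cited commutator estimates.
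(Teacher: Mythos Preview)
Your proposal is correct and follows essentially the same route as the paper: the commutator $\mathcal{C}(u^i,\Rz_\beta)$ is handled by Coifman--Rochberg--Weiss together with a localized BMO estimate, the inside term $u^i\partial_\beta^s u^i$ is rewritten via the bi-commutator $H$ plus $\partial_\beta^s|u|^2$ and controlled using $\chi_{2L}|u|\equiv 1$, and the far-away pieces come from the kernel decay of $\Rz_\beta$. The only cosmetic difference is the order of operations: the paper first inserts $\cut_L$ inside $\Rz_\beta$ (producing the tail sum) and then applies CRW to the compactly supported function $\cut_L(u^i-c^i)$, whose \emph{global} BMO norm is controlled by Lemma~\ref{la:BMOest}, rather than appealing to a ``local BMO seminorm'' as you phrase it---so when you carry this out, be sure to replace $u^i$ by such a cutoff-and-mean-subtracted version before invoking CRW.
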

\begin{proof}
Again, we may assume $K = 0$, in order to work with less indices.

For any constant $c^i$, we again decompose the quantity in question into differently localized terms
\[
 \cut_{0} u^i\ \Rz_\beta [|\nabla^s u|^{p_s-2}\ \partial_\beta^s u^i] = I+II+III,
\]
Recall the definition \eqref{eq:def:CRWcommutator} of the commutator $\mathcal{C}(\cdot,\Rz)[\cdot]$, then 
\begin{align*}
%
 I :=\quad&\cut_{0} \Rz_\beta [\cut_{L}|\nabla^s u|^{p_s-2}\ u^i \partial_\beta^s u^i],\\
 II :=\quad&\cut_{0}\ \mathcal{C}(\cut_L (u^i-c^i),\ \Rz_\beta)[\cut_{L}|\nabla^s u|^{p_s-2}\ \partial_\beta^s u^i],\\
 III := \quad& \sum_{l=1}^\infty \cut_{0} u^i\ \Rz_\beta [\cutA_{L+l}|\nabla^s u|^{p_s-2}\ \partial_\beta^s u^i].
\end{align*}
Firstly, for $II$, by the usual Rochberg-Weiss commutator theorem,
\begin{align*}
 \vrac{II}_{p_s'} \aleq [\cut_L (u^i-c^i)]_{BMO}\ \vrac{\cut_L|\nabla^s u|^{p_s-1}}_{p_s'} = [\cut_L (u^i-c^i)]_{BMO}\ \vrac{\cut_L \nabla^s u}_{p_s}^{p_s-1}.
\end{align*}
Since $c^i$ was chosen arbitrarily, we can employ Lemma~\ref{la:BMOest} and then Lemma~\ref{la:RieszlapsII}, and obtain some $\sigma > 0$, some $S \geq 2$, for which
\begin{align*}
 \vrac{II}_{p_s'} \aleq \vrac{\cut_{L+S} \laps{s} u}_{p_s}^{p_s} + \vrac{\laps{s} u}_{p_s}\ \sum_{k=1}^\infty 2^{-(S+k)\sigma} \vrac{\cut_{L+S+k} \laps{s} u}_{p_s}^{p_s-1}.
\end{align*}
As for $I$,
\[
 \vrac{I}_{p_s'} \aleq \vrac{\cut_{L}\ \nabla^s u}_{p_s}^{p_s-2}\ \vrac{\cut_{L}\ u \cdot \partial_\beta^s u}_{p_s}.
\]
with the bi-commutator
\[
 H_T(a,b) := T(ab) - aTb - bTa,
\]
we then have
\[
 \chi_L u \cdot \partial_\beta^s u = -\frac{1}{2} \chi_L  H_{\partial_\beta^s}(u,u) - \chi_{L}\frac{1}{2} \partial_\beta^s |u|^2
\]
Since $\chi_{2L}|u| \equiv 1$ we can use the arguments on bi-commutators, in e.g. \cite{BPSknot12,DSpalphSphere}, to conclude that for some $\sigma > 0$,
\[
 \vrac{\cut_{L}\ u \cdot \partial_\beta^s u}_{p_s} \aleq \vrac{\cut_{L+S} \laps{s} u}_{p_s}^2 + 2^{-L\sigma}\vrac{\cut_{L+S} \laps{s} u}_{p_s} + \sum_{k=1}^\infty 2^{-(S+k)\sigma} \vrac{\cut_{L+S+k} \laps{s} u}_{p_s}^2,
\]
and consequently for possibly a smaller $\sigma$, using Lemma~\ref{la:RieszlapsII}, we have
\[
 \vrac{I}_{p_s'} \aleq \vrac{\cut_{L+S} \laps{s} u}_{p_s}^{p_s} + 2^{-L\sigma}\vrac{\cut_{L+S} \laps{s} u}_{p_s}+ \vrac{\laps{s} u}_{p_s}\ \sum_{k=1}^\infty 2^{-(S+k)\sigma} \vrac{\cut_{L+S+k} \laps{s} u}_{p_s}^{p_s-1}.
\]
It remains to analyze $III$, which we do by Lemma~\ref{la:local}
\begin{align*}
 \vrac{III}_{p_s'} 
 &\aleq \sum_{l=1}^\infty 2^{-\frac{n}{p_s'}(L+l)} \ \vrac{\cutA_{L+l}|\nabla^s u|^{p_s-1}}_{p_s'}\\
 &\aleq 2^{-\frac{n}{p_s'}L} \vrac{\cut_{L+S} \laps{s} u}_{p_s}^{p_s-1} + \sum_{l=1}^\infty 2^{-\sigma (L+S+l)} \ \vrac{\cutA_{L+l}\laps{s} u}^{p_s-1}_{p_s'}.\\
 \end{align*}
Taking again $S$ a multiple of $L$ and adjusting $\sigma$, the claim is established.
\end{proof}

For the $SO(N)$-case, we use essentially the same argument: If $\theta_{ij} = \theta_{ji}$ and $Q^{ki} Q^{kj} = \delta^{ij}$,
\begin{align*}
&\theta_{ij} Q^{ki} \Rz_{\alpha} [|\nabla^s Q|^{p_s-2} \partial_\alpha^s Q^{kj}]\\
=&\theta_{ij} \Rz_{\alpha} [|\nabla^s Q|^{p_s-2} Q^{ki} \partial_\alpha^s Q^{kj}]\\
&+\theta_{ij}\ \mathcal{C}(Q^{ki}, \Rz_{\alpha}) [|\nabla^s Q|^{p_s-2} \partial_\alpha^s Q^{kj}]\\
\overset{\theta \in sym}{=}&\frac{1}{2} \theta_{ij} \Rz_{\alpha} [|\nabla^s Q|^{p_s-2} \brac{Q^{ki} \partial_\alpha^s Q^{kj}+(\partial_\alpha^s Q^{ki})\ Q^{kj} }]\\
&+\theta_{ij}\ \mathcal{C}(Q^{ki}, \Rz_{\alpha}) [|\nabla^s Q|^{p_s-2} \partial_\alpha^s Q^{kj}]\\
\overset{Q^TQ\equiv I}{=}&\frac{1}{2} \theta_{ij} \Rz_{\alpha} [|\nabla^s Q|^{p_s-2} \brac{Q^{ki} \partial_\alpha^s Q^{kj}+(\partial_\alpha^s Q^{ki})\ Q^{kj} - \partial_\alpha^s(Q^{ki}  Q^{kj})}]\\
&+\theta_{ij}\ \mathcal{C}(Q^{ki}, \Rz_{\alpha}) [|\nabla^s Q|^{p_s-2} \partial_\alpha^s Q^{kj}]\\
=&\frac{1}{2} \theta_{ij} \Rz_{\alpha} [|\nabla^s Q|^{p_s-2}\ H_{\partial^s_\alpha} (Q^{ki},Q^{kj})]\\
&+\theta_{ij}\ \mathcal{C}(Q^{ki}, \Rz_{\alpha}) [|\nabla^s Q|^{p_s-2} \partial_\alpha^s Q^{kj}]\\
\end{align*}
Thus, we obtain the following

\begin{lemma}\label{la:orthogonalpartSO}
For any $K \in \Z$, $L \in \N$, $L \geq L_0$, and any $u$ the following holds:
If $\chi_{2L} u^T u \equiv 1$, then for some uniform $\sigma > 0$,
\begin{align*}
&\|\cut_{K} \theta_{ij}u^{ki}\ \Rz_\beta [|\nabla^s u|^{p_s-2}\ \partial_\beta^s u^{kj}] \|_{p_s'}\\
\aleq& \brac{\vrac{\cut_{K+L} \laps{s} u}_{p_s} + 2^{-L\sigma}}\ \vrac{\cut_{K+L} \laps{s} u}_{p_s}^{p_s-1}\\
&+ (1+\vrac{\laps{s} u}_{p_s})\ \sum_{k=1}^\infty 2^{-(L+k)\sigma} \vrac{\cut_{K+L+k} \laps{s} u}_{p_s}^{p_s-1}.
\end{align*}
\end{lemma}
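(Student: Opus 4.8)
The plan is to mirror the proof of Lemma~\ref{la:orthogonalpart} line by line, the only new ingredient being the algebraic identity worked out just above the statement. Concretely, I would again assume $K=0$ to reduce indices, pick an arbitrary constant matrix $c^{ki}$, and split $\cut_0 \theta_{ij} u^{ki}\ \Rz_\beta[|\nabla^s u|^{p_s-2}\partial_\beta^s u^{kj}]$ into three pieces $I+II+III$ according to the localization scales $\cut_L$, a commutator remainder, and the tails $\sum_{l\ge 1}\cutA_{L+l}$, exactly as in \eqref{eq:def:CRWcommutator} and the decomposition of Lemma~\ref{la:orthogonalpart}. The term $III$ is handled verbatim by Lemma~\ref{la:local}, producing the $2^{-\frac{n}{p_s'}L}\vrac{\cut_{L+S}\laps{s}u}_{p_s}^{p_s-1}$ contribution plus the summable tail, after taking $S$ a multiple of $L$.

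For the commutator term $II$, namely $\cut_0\,\theta_{ij}\,\mathcal{C}(\cut_L(u^{ki}-c^{ki}),\Rz_\beta)[\cut_L |\nabla^s u|^{p_s-2}\partial_\beta^s u^{kj}]$, I would invoke the Coifman--Rochberg--Weiss theorem \cite{CRW76} to bound it by $[\cut_L(u^{ki}-c^{ki})]_{BMO}\,\vrac{\cut_L\nabla^s u}_{p_s}^{p_s-1}$, then use that $c^{ki}$ is arbitrary together with Lemma~\ref{la:BMOest} and Lemma~\ref{la:RieszlapsII} to replace the $BMO$-seminorm by $\vrac{\cut_{L+S}\laps{s}u}_{p_s}$ modulo summable tails weighted by $\vrac{\laps{s}u}_{p_s}$. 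This is the same mechanism that produced the $\vrac{II}_{p_s'}$ estimate in Lemma~\ref{la:orthogonalpart}, and the matrix indices $k,i,j$ play no role beyond being summed over a fixed finite range.

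The term $I$ is where the $SO(N)$ structure enters. By the computation displayed before the statement, the symmetry $\theta_{ij}=\theta_{ji}$ together with $Q^TQ\equiv I$ turns the ``orthogonal'' combination $\theta_{ij}Q^{ki}\partial_\alpha^s Q^{kj}$ into $\tfrac12\theta_{ij}H_{\partial_\alpha^s}(Q^{ki},Q^{kj})$ plus a commutator that gets absorbed into $II$. Hence, after localization, $I$ becomes $\tfrac12\cut_0\,\theta_{ij}\,\Rz_\beta[\cut_L |\nabla^s u|^{p_s-2}\,H_{\partial_\beta^s}(u^{ki},u^{kj})]$ up to error terms, and I bound it by $\vrac{\cut_L\nabla^s u}_{p_s}^{p_s-2}\,\vrac{\cut_L H_{\partial_\beta^s}(u^{ki},u^{kj})}_{p_s}$. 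Since $\chi_{2L}u^Tu\equiv 1$, the bi-commutator estimates of \cite{BPSknot12,DSpalphSphere} apply to each $H_{\partial_\beta^s}(u^{ki},u^{kj})$ just as in the sphere case (there one uses $H_{\partial_\beta^s}(u^i,u^i) = -2u^i\partial_\beta^s u^i$ on $|u|\equiv 1$; here the constraint is $\sum_k Q^{ki}Q^{kj}=\delta^{ij}$), giving $\vrac{\cut_L u^{ki}\cdot\partial_\beta^s u^{kj}}_{p_s}\aleq\vrac{\cut_{L+S}\laps{s}u}_{p_s}^2 + 2^{-L\sigma}\vrac{\cut_{L+S}\laps{s}u}_{p_s}+(\text{summable tail})$, and then Lemma~\ref{la:RieszlapsII} converts the $|\nabla^s u|^{p_s-2}$ prefactor to $\laps{s}$-norms. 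Collecting $I$, $II$, $III$, choosing $S$ a suitable multiple of $L$ and shrinking $\sigma$, yields exactly the claimed inequality.

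The main obstacle is purely bookkeeping of the matrix indices inside the bi-commutator estimates: one must check that the constraint $Q^{ki}Q^{kj}=\delta^{ij}$ (rather than $|u|^2=1$) still lets the arguments of \cite{BPSknot12,DSpalphSphere} localize $H_{\partial_\beta^s}(u^{ki},u^{kj})$ with the error-scale $2^{-L\sigma}$ intact; but since those arguments only use that $u$ is bounded and that the relevant quadratic expression is a derivative of a constant on the support of $\chi_{2L}$, they go through unchanged. Everything else is a transcription of Lemma~\ref{la:orthogonalpart} with $u^i$ replaced by the finite family $\{u^{ki}\}$ and the sign bookkeeping absorbed into the $\theta_{ij}$.
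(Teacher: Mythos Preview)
Your proposal is correct and follows essentially the same approach as the paper: the paper's proof consists precisely of the algebraic identity displayed just before the lemma (rewriting $\theta_{ij}Q^{ki}\Rz_\alpha[|\nabla^s Q|^{p_s-2}\partial_\alpha^s Q^{kj}]$ as a bi-commutator term $\tfrac12\theta_{ij}\Rz_\alpha[|\nabla^s Q|^{p_s-2}H_{\partial_\alpha^s}(Q^{ki},Q^{kj})]$ plus a Coifman--Rochberg--Weiss commutator $\theta_{ij}\mathcal{C}(Q^{ki},\Rz_\alpha)[\cdot]$), after which the localized estimates of Lemma~\ref{la:orthogonalpart} transfer verbatim. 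Your $I+II+III$ decomposition, with the symmetrization via $\theta_{ij}=\theta_{ji}$ and the constraint $u^Tu\equiv I$ on $\chi_{2L}$ entering in term $I$ to produce $H_{\partial_\beta^s}(u^{ki},u^{kj})$ (plus the $2^{-L\sigma}$ error from $\chi_L\partial_\beta^s(u^{ki}u^{kj})$), is exactly this mechanism carried out after localization rather than before.
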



\section{argument: tangential part}
\begin{lemma}\label{la:tangentialpart}
Let $u$ be as in Theorem~\ref{th:mainS}. For any $\omega_{ij} = -\omega_{ji} \in \{-1,0,1\}$, any smooth $\supp \varphi \subset \supp \cut_L$, $L \in \Z$
\begin{align*}
&\omega_{ij} \int \laps{s} \varphi\ u^j\ \ \Rz_\beta [|\nabla^s u|^{p_s -2} \Rz_\beta [\laps{s} u^i]] \\
\aleq & \vrac{\cut_{L+K}\laps{s} u}_{p_s}^{p_s} + \vrac{\laps{s}u}_{p_s,\R^n} \sum_{k=1} 2^{-\sigma(k+L+K)} \vrac{\cut_{L+K+k} \laps{s}u}_{p_s}^{p_s-1}\\
\end{align*}
\end{lemma}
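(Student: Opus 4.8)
The plan is to transplant the computation of the general case from Section~\ref{s:classical} --- the one beginning at \eqref{eq:estimatebyPDEarg} --- into the present fractional-order, localized setting. I normalize $\vrac{\laps{s}\varphi}_{p_s}\leq 1$; the general estimate then follows by homogeneity (the factor $\vrac{\laps{s}\varphi}_{p_s}+[\varphi]_{BMO}$ being absorbed into $\aleq$), and since $\supp\varphi\subset\supp\cut_L$ this also gives $[\varphi]_{BMO}\aleq 1$, while $|u|\equiv 1$ on $\Omega$. I shall use freely that $\partial_\beta^s=\Rz_\beta\laps{s}=\laps{s}\Rz_\beta$ as Fourier multipliers, and that $\Rz_\beta^\ast=-\Rz_\beta$, $(\partial_\beta^s)^\ast=-\partial_\beta^s$, $(\laps{s})^\ast=\laps{s}$.

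The first step is to split off the bi-commutator $H_s$ of \eqref{eq:defHs}: since $u^j\laps{s}\varphi=\laps{s}(\varphi u^j)-\varphi\laps{s}u^j-H_s(\varphi,u^j)$, the left-hand side equals $A-B-C$ with
\begin{align*}
 A&:=\omega_{ij}\int\laps{s}(\varphi u^j)\ \Rz_\beta\big[|\nabla^s u|^{p_s-2}\,\partial_\beta^s u^i\big],\\
 B&:=\omega_{ij}\int\varphi\,\laps{s}u^j\ \Rz_\beta\big[|\nabla^s u|^{p_s-2}\,\partial_\beta^s u^i\big],\\
 C&:=\omega_{ij}\int H_s(\varphi,u^j)\ \Rz_\beta\big[|\nabla^s u|^{p_s-2}\,\partial_\beta^s u^i\big].
\end{align*}
For $A$, moving $\laps{s}$ across and using $\laps{s}\Rz_\beta=\partial_\beta^s$ gives $A=\omega_{ij}\int\varphi\,u^j\,\partial_\beta^s\!\big(|\nabla^s u|^{p_s-2}\partial_\beta^s u^i\big)$; because $\varphi^i:=\omega_{ij}u^j\varphi$ is a tangential variation of $u$ (one has $u^i\omega_{ij}u^j=0$ by antisymmetry), a single integration by parts in the Euler--Lagrange equation of $E_s$ identifies this with $0$ --- the fractional analogue of \eqref{eq:nELapplied}. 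For $B$, moving $\Rz_\beta$ across ($\Rz_\beta^\ast=-\Rz_\beta$) and inserting the commutator $\mathcal{C}(\cdot,\cdot)[\cdot]$ of \eqref{eq:def:CRWcommutator} through $\Rz_\beta[\varphi\laps{s}u^j]=\varphi\,\partial_\beta^s u^j-\mathcal{C}(\varphi,\Rz_\beta)[\laps{s}u^j]$, the leading contribution $-\omega_{ij}\int\varphi\,\partial_\beta^s u^j\,|\nabla^s u|^{p_s-2}\partial_\beta^s u^i$ vanishes pointwise by the antisymmetry identity \eqref{eq:secondguyPDE}, so that $B=\omega_{ij}\int\mathcal{C}(\varphi,\Rz_\beta)[\laps{s}u^j]\,|\nabla^s u|^{p_s-2}\partial_\beta^s u^i$.

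It remains to estimate $B$ and $C$, both now in ``good commutator form''. Disregarding localization for a moment, H\"older's inequality together with the Coifman--Rochberg--Weiss theorem \cite{CRW76} (for $\mathcal{C}(\varphi,\Rz_\beta)$ in $B$) and the bi-commutator bounds of \cite{DR1dSphere,Sfracenergy} (for $H_s(\varphi,u^j)$ in $C$) give $|B|+|C|\aleq[\varphi]_{BMO}\,\vrac{\laps{s}u}_{p_s}\,\vrac{\nabla^s u}_{p_s}^{p_s-1}\aleq\vrac{\laps{s}u}_{p_s,\R^n}^{p_s}$. To obtain the stated localized bound one proceeds exactly as in Lemma~\ref{la:orthogonalpart} and Lemma~\ref{la:lefthandside}: decompose the factors along the dyadic shells $\cut_{L+K},\cutA_{L+K+1},\cutA_{L+K+2},\dots$; the near-diagonal piece contributes (after relabeling $K$ and using $[\varphi]_{BMO}\aleq 1$, the $L^{p_s'}$-boundedness of $\Rz_\beta$, and the Riesz-to-$\laps{s}$ localization of Lemma~\ref{la:RieszlapsII}) the main term $\vrac{\cut_{L+K}\laps{s}u}_{p_s}^{p_s}$, while the far pieces are handled via the off-diagonal kernel decay of $\Rz_\beta$, of $\mathcal{C}(\varphi,\Rz_\beta)$ (whose kernel, in its first slot, is essentially supported on $\supp\varphi\subset\supp\cut_L$) and of $H_s(\varphi,\cdot)$, which by Lemma~\ref{la:local} produces, after shrinking $\sigma$ and summing, the tail $\vrac{\laps{s}u}_{p_s,\R^n}\sum_{k\geq 1}2^{-\sigma(k+L+K)}\vrac{\cut_{L+K+k}\laps{s}u}_{p_s}^{p_s-1}$. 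Adding $|A|+|B|+|C|$ gives the claim.

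The algebra of the second paragraph is short; the hard part will be the third paragraph, i.e. the localization bookkeeping: since $\laps{s}$, $\Rz_\beta$, the single commutator $\mathcal{C}(\varphi,\Rz_\beta)$ and the bi-commutator $H_s$ are all non-local, one must split the products into dyadic shells and extract the geometric factor $2^{-\sigma(k+L+K)}$ without ever producing a quantity that is not itself controlled by a ``good term'' --- which is exactly what the estimates already assembled for Lemmas~\ref{la:lefthandside} and \ref{la:orthogonalpart} accomplish. A minor, standard technical point is the admissibility of the variation $\omega_{ij}u^j\varphi$ in the Euler--Lagrange equation of $E_s$, which is obtained by mollifying $u$ and passing to the limit using $\laps{s}u\in L^{p_s}$.
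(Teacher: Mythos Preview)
Your plan is correct and follows essentially the same route as the paper. Your $A$--$B$--$C$ decomposition is exactly the paper's: $A=0$ is the Euler--Lagrange identity \eqref{eq:EKeqn}, $C$ is the paper's term $I$ (the $H_s$ bi-commutator), and your $B$---after the antisymmetry cancellation \eqref{eq:OtherZero}---is the commutator expression the paper then splits into $II$, $III$, $IV$ by inserting the dyadic cutoffs $\cut_K$, $\cutA_{K+l}$, $\cutA_{2K+k}$ directly; the paper carries out that localization explicitly (estimating $II$ by Coifman--Rochberg--Weiss, $III$ and $IV$ by Lemma~\ref{la:local} and Poincar\'e/Sobolev--Poincar\'e), whereas you defer it to the pattern of Lemmas~\ref{la:lefthandside} and \ref{la:orthogonalpart}, which is the same machinery.
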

\begin{proof}
Again we show the claim for $\cut_0$ instead of $\cut_L$.
The Euler-Lagrange equations imply that for any $\varphi \in C_0^\infty(\Omega)$, we have for $\omega_{ij} = -\omega_{ji} \in \{-1,0,1\}$
\begin{equation}\label{eq:EKeqn}
 \omega_{ij} \int_{\R^n} \Rz_\beta [|\nabla^s u|^{p_s -2} \Rz_\beta [\laps{s} u^i]]\ \laps{s} (u^j \varphi) = 0.
\end{equation}
This is true, since $(u^i \omega_{ij} \varphi)_{j} \in \R^N$ is perpendicular to $T_u \S^{N-1}$ and using partial integration of $\Rz_\beta$.

On the other hand, by the antisymmetrie of $\omega$,
\begin{equation}\label{eq:OtherZero}
 \omega_{ij} \int_{\R^n} |\nabla^s u|^{p_s -2} \Rz_\beta [\laps{s} u^i]\ \Rz_\beta [\laps{s} u^j] \varphi = 0.
\end{equation}
Consequently, recalling the definition of the commutator $\mathcal{C}(\cdot,T)[\cdot)$ in \eqref{eq:def:CRWcommutator} and the bi-commutator $H_s$ defined in \eqref{eq:defHs},
\begin{align*}
 &\omega_{ij} \int \laps{s} \varphi\ u^j\ \ \Rz_\beta [|\nabla^s u|^{p_s -2} \Rz_\beta [\laps{s} u^i]]\\
 =&-\omega_{ij}  \int H_s(\varphi,u^j)\ \ \Rz_\beta [|\nabla^s u|^{p_s -2} \Rz_\beta [\laps{s} u^i]]\\
 &- \omega_{ij} \int \cut_{2K} \mathcal{C}(\varphi,\Rz_\beta) [\cut_{K}\laps{s} u^j]\ \ |\nabla^s u|^{p_s -2} \Rz_\beta [\laps{s} u^i]\\
 &- \sum_{l=1}^\infty \omega_{ij} \int \cut_{0} \varphi \Rz_\beta [\cutA_{K+l}\laps{s} u^j]\ \ |\nabla^s u|^{p_s -2} \Rz_\beta [\laps{s} u^i]\\
 & + \sum_{k=1}^\infty \omega_{ij} \int \cutA_{2K+k} \Rz_\beta [\varphi\ \laps{s} u^j]\ \ |\nabla^s u|^{p_s -2} \Rz_\beta [\laps{s} u^i]\\
 &=: I + II + III + IV.
\end{align*}
For the first term, by the usual arguments on bi-commutators, e.g., as in \cite[Lemma 3.2.]{BPSknot12}, \cite[Lemma 2.8]{DSpalphSphere}, using also again Lemma~\ref{la:RieszlapsII}, we obtain readily,	
\[
 |I| \aleq \vrac{\cut_K\laps{s} u}_{p_s}^{p_s} + \vrac{\laps{s}u}_{p_s,\R^n} \sum_{k=1} 2^{-\sigma(k+K)} \vrac{\cut_{K+k} \laps{s}u}_{p_s}^{p_s-1}
\]
For the second term, we use again the Rochberg-Weiss commutator theorem \cite{CRW76}, and then again Lemma~\ref{la:RieszlapsII},
\begin{align*}
 |II| &\aleq   \vrac{\cut_{2K}\nabla^s u}_{p_s}^{p_s-1}\ [\varphi]_{BMO}\ \vrac{\cut_{K}\laps{s} u}_{p_s}\\
 &\aleq \vrac{\cut_K\laps{s} u}_{p_s}^{p_s} + \vrac{\laps{s}u}_{p_s,\R^n} \sum_{k=1} 2^{-\sigma(k+K)} \vrac{\cut_{K+k} \laps{s}u}_{p_s}^{p_s-1}.
\end{align*}
As for $III$, by Poincar\'{e} inequality and the localization Lemma~\ref{la:local}, and yet again Lemma~\ref{la:RieszlapsII},
\begin{align*}
 |III| &\aleq  \sum_{l=1}^\infty \int \cut_{0} \varphi \Rz_\beta [\cutA_{K+l}\laps{s} u^j]\ \ |\nabla^s u|^{p_s -2} \Rz_\beta [\laps{s} u^i]\\
 &\aleq  \sum_{l=1}^\infty \vrac{\cut_0 \Rz_\beta [\cutA_{K+l}\laps{s} u^j]}_{\infty}\ \vrac{\varphi}_{p_s}\ \vrac{\cut_0 \nabla^s u}_{p_s}^{p_s-1}\\
 &\aleq  \sum_{l=1}^\infty 2^{-(K+l)\frac{n}{p_s}}\ \vrac{\cutA_{K+l}\laps{s} u^j}_{p_s}\ \vrac{\cut_0 \nabla^s u}_{p_s}^{p_s-1}\\
 &\aleq \vrac{\cut_{2K}\laps{s} u}_{p_s}^{p_s} + \vrac{\laps{s}u}_{p_s,\R^n} \sum_{k=1} 2^{-\sigma(k+2K)} \vrac{\cut_{2K+k} \laps{s}u}_{p_s}^{p_s-1}.
\end{align*}
Finally, we treat $IV$, now using additionally Sobolev-Poincar\'{e}-inequality
\begin{align*}
 |IV| \aleq & \sum_{k=1}^\infty  2^{-(K+k+k)\frac{n}{p_s'}}\ R^{-\frac{n}{p_s'}}\ \vrac{\varphi\ \laps{s} u^j}_1\ \ \vrac{\cut_{2K+k}\nabla^s u}_{p_s}^{p_s -1} \\
\aleq & \sum_{k=1}^\infty  2^{-(K+k+k)\frac{n}{p_s'}}\ R^{-\frac{n}{p_s'}}\ \vrac{\varphi}_{p_s'}\ \vrac{\cut_0 \laps{s} u^j}_{p_s}\ \ \vrac{\cut_{2K+k}\nabla^s u}_{p_s}^{p_s -1} \\
\aleq & \sum_{k=1}^\infty  2^{-(K+k+k)\frac{n}{p_s'}}\   \vrac{\laps{s} \varphi}_{p_s'}\ \vrac{\cut_0 \laps{s} u^j}_{p_s}\ \ \vrac{\cut_{2K+k}\nabla^s u}_{p_s}^{p_s -1}.
\end{align*}

\end{proof}

Again in the same fashion we obtain an estimate for the case of $SO(N)$-target. 
The Euler-Lagrange equation tells us that a critical point $u: \Omega \to SO(N)$ satisfies
\begin{equation}\label{eq:ELeq}
  \int |\nabla^s u|^{p_s-2} \partial_\alpha^s u_{kj}\ \nabla^s \psi_{kj} = 0,\\
\end{equation}
for any $\psi$ with support in $\Omega$ and a.e. $\psi \in T_u SO(N)$. For antisymmetric $\omega \in so(N)$, $\varphi \in C_0^\infty(\Omega)$, we thus set
\[
 \psi_{kj} := Q^{ki} \omega_{ij} \varphi.
\]
Then $\psi \in T_{Q}SO(N)$. Indeed, $\nu: (-1,1) \to SO(N)$ defined as $\nu(t) := ue^{t\omega \varphi}$ has $\nu'(0) = \psi$. That is, \eqref{eq:ELeq} implies for any $\varphi \in C_0^\infty(D)$, $\omega \in so(N)$,
\begin{equation}\label{eq:ourELeq}
  \omega_{ij}  \int |\nabla^s u|^{p_s-2} \partial_\alpha^s u^{kj}\ \nabla^s (u^{ki} \varphi) = 0.
\end{equation}
Moreover, by the antisymmetry of $\omega$,
\begin{equation}\label{eq:antisymmetrykills}
 \partial_\alpha^s Q^{kj}\ \omega_{ij} \partial_\alpha^s Q^T_{ik} = 0.
\end{equation}

Consequently, for $\omega \in so(N)$, using in the last step \eqref{eq:ourELeq} and \eqref{eq:antisymmetrykills},
\begin{align*}
& \int \omega_{ij} Q^T_{ik} \Rz_{\alpha} [|\nabla^s Q|^{p_s-2} \partial_\alpha^s Q^{kj}]\ \laps{s} \varphi\\
=&-\int |\nabla^s Q|^{p_s-2} \partial_\alpha^s Q^{kj}\ \Rz_\alpha [\omega_{ij} Q^T_{ik} \laps{s} \varphi]\\
=&-\omega_{ij} \int |\nabla^s Q|^{p_s-2} \partial_\alpha^s Q^{kj}\ Q^T_{ik} \partial_\alpha^s \varphi\\
&-\omega_{ij}\int |\nabla^s Q|^{p_s-2} \partial_\alpha^s Q^{kj}\ \mathcal{C}( Q^T_{ik},\Rz_\alpha) [\laps{s} \varphi]\\
=&\omega_{ij} \int |\nabla^s Q|^{p_s-2} \partial_\alpha^s Q^{kj}\ H_{\partial_\alpha^s}(Q^T_{ik}, \varphi)\\
&-\omega_{ij}\int |\nabla^s Q|^{p_s-2} \partial_\alpha^s Q^{kj}\ \mathcal{C}( Q^T_{ik},\Rz_\alpha) [\laps{s} \varphi]\\
\end{align*}
From this argument, we obtain readily
\begin{lemma}\label{la:tangentialpartSO}
Let $u$ be as in Theorem~\ref{th:mainSO}. For any $\omega_{ij} = -\omega_{ji} \in \{-1,0,1\}$, any smooth $\supp \varphi \subset \supp \cut_L$, $L \in \Z$
\begin{align*}
&\omega_{ij} \int \laps{s} \varphi\ u^{ik}\ \ \Rz_\beta [|\nabla^s u|^{p_s -2} \Rz_\beta [\laps{s} u^{kj}]] \\
\aleq & \vrac{\cut_{L+K}\laps{s} u}_{p_s}^{p_s} + \vrac{\laps{s}u}_{p_s,\R^n} \sum_{k=1} 2^{-\sigma(k+L+K)} \vrac{\cut_{L+K+k} \laps{s}u}_{p_s}^{p_s-1}\\
\end{align*}
\end{lemma}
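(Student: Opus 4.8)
The plan is to mimic the proof of Lemma~\ref{la:tangentialpart} line by line, feeding in the $SO(N)$-algebra derived just above in place of the sphere relations \eqref{eq:EKeqn} and \eqref{eq:OtherZero}. As there, I first reduce to the scale $\cut_0$: every estimate below is invariant under shifting all indices by $+L$, so it suffices to prove the bound with $\cut_0$ in place of $\cut_L$.

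The starting point is the identity established immediately before the statement. Testing the Euler--Lagrange equation \eqref{eq:ourELeq} against the tangent field $\psi_{kj}=u^{ik}\omega_{ij}\varphi\in T_uSO(N)$ and using the pointwise cancellation \eqref{eq:antisymmetrykills} removes the two ``bad'' integrals of the form $\int|\nabla^s u|^{p_s}(\cdots)$ and leaves
\begin{align*}
&\omega_{ij}\int \laps{s}\varphi\ u^{ik}\ \Rz_\beta[|\nabla^s u|^{p_s-2}\Rz_\beta[\laps{s}u^{kj}]]\\
&\qquad=\omega_{ij}\int|\nabla^s u|^{p_s-2}\ \partial_\beta^s u^{kj}\ H_{\partial_\beta^s}(u^{ik},\varphi)\\
&\qquad\quad-\omega_{ij}\int|\nabla^s u|^{p_s-2}\ \partial_\beta^s u^{kj}\ \mathcal{C}(u^{ik},\Rz_\beta)[\laps{s}\varphi].
\end{align*}
This is exactly the structure of the right-hand side appearing in the proof of Lemma~\ref{la:tangentialpart}: a bi-commutator $H_{\partial_\beta^s}$-term and a Coifman--Rochberg--Weiss commutator $\mathcal{C}(\cdot,\Rz_\beta)$-term, the weight $|\nabla^s u|^{p_s-2}$ being carried along by H\"older's inequality. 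In particular the target geometry has been completely digested at this stage, and nothing below sees whether the target is $\S^{N-1}$ or $SO(N)$.

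From here I would repeat verbatim the localization and estimates of the terms $I$--$IV$ in the proof of Lemma~\ref{la:tangentialpart}: split each of the two integrals by inserting the cutoffs $\cut_K$, $\cut_{2K}$ and write the resulting commutator errors as telescoping sums over $\cutA_{K+l}$; bound the $H_{\partial_\beta^s}$-term by the bi-commutator estimates of \cite{DR1dSphere,Sfracenergy,BPSknot12,DSpalphSphere}, which produce $\vrac{\laps{s}\varphi}_{p_s'}$ (or $[\varphi]_{BMO}$) times a power of $\vrac{\nabla^s u}_{p_s}$; bound the $\mathcal{C}(u^{ik},\Rz_\beta)$-term by the Coifman--Rochberg--Weiss theorem \cite{CRW76}, estimating the commutator norm by $[u^{ik}]_{BMO}$; convert $\nabla^s u$ back to $\laps{s}u$ throughout via Lemma~\ref{la:RieszlapsII}; and control the far-away tails with the localization Lemma~\ref{la:local} together with (Sobolev--)Poincar\'e, exactly as for $III$ and $IV$ above. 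Collecting these tails into the geometric sum $\sum_{k\geq1}2^{-\sigma(k+L+K)}\vrac{\cut_{L+K+k}\laps{s}u}_{p_s}^{p_s-1}$ and the leading local contribution into $\vrac{\cut_{L+K}\laps{s}u}_{p_s}^{p_s}$ yields the claimed inequality, with the same $\sigma$.

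The main obstacle is not the $SO(N)$-algebra, which is dispatched by the displayed identity, but the same bookkeeping that makes Lemma~\ref{la:tangentialpart} lengthy: since $\laps{s}$ and $\Rz_\beta$ are nonlocal, the cutoffs $\cut_l$ do not commute with them, so every insertion of a cutoff generates tails that must be summed with geometric weight. For $s<1$ there is the additional constraint that $\partial_\beta^s u$ is only a distribution, so one cannot project onto tangent planes pointwise; this is precisely why the argument runs entirely through $H_{\partial_\beta^s}$ and $\mathcal{C}(\cdot,\Rz_\beta)$, using only the weak form \eqref{eq:ourELeq} and the algebraic identity \eqref{eq:antisymmetrykills}, and never a tangent-plane decomposition.
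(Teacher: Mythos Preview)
Your proposal is correct and is precisely the paper's approach: the paper derives the identity you display (the two terms $H_{\partial_\alpha^s}(u^{ik},\varphi)$ and $\mathcal{C}(u^{ik},\Rz_\alpha)[\laps{s}\varphi]$) just before the statement and then simply writes ``From this argument, we obtain readily'', meaning exactly the repetition of the $I$--$IV$ localization-and-estimate scheme from Lemma~\ref{la:tangentialpart} that you spell out. The only cosmetic difference from the sphere case is that here the CRW commutator carries $u^{ik}$ rather than $\varphi$, and the bi-commutator is $H_{\partial_\alpha^s}$ rather than $H_s$; as you note, this changes nothing in the analysis since both are handled by the same commutator/bi-commutator bounds together with Lemma~\ref{la:BMOest}, Lemma~\ref{la:RieszlapsII}, and Lemma~\ref{la:local}.
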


%
%
%
%

\begin{appendix}
\section{Some basic Estimates}
Recall \eqref{eq:cutdef}. In this section we recall some estimates involving $\laps{s}$-operators, in particular in relation to (pseudo-)localization.

The most basic estimate is the following localization. It follows quite naturally from the potential definition of the involved operators. For details we refer, e.g., to the appendix of \cite{BPSknot12}.
\begin{lemma}\label{la:local}
Let $s \in (-n,n)$, and if $s > 0$, and $T^s$ defined as follows. \begin{itemize} \item if $s > 0$, $T^s = \nabla^s$ or $T^s = \laps{s}$ \item if $s = 0$, $T^0 = \Rz_\alpha$, for any $\alpha \in \{1,\ldots,n\}$,
\item and if $s < 0$, $T^s = \lapms{s}$. \end{itemize}
Then, $l \geq k+1$, for any $f$,
\[
 \vrac{\cutA_l T^s[\cut_k f]}_\infty \aleq (2^k)^{-n-s} \vrac{\cut_k f}_1
\]
and
\[
 \vrac{\cut_k T^s[\cutA_l f]}_\infty \aleq (2^l)^{-n-s} \vrac{\cutA_l f}_1
\]
\end{lemma}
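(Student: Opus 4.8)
\textbf{Proof plan for Lemma~\ref{la:local}.}
The plan is to reduce everything to the kernel representations of the operators $T^s$ and exploit that $\cutA_l$ and $\cut_k$ live on disjoint (or almost disjoint, but well-separated) annuli/balls when $l \geq k+1$. First I would recall that each of the operators in the list has (up to a constant and, for $\laps{s}$ with $s$ a positive even integer, a differential correction that is harmless away from the diagonal) a convolution kernel $K^s(x-y)$ with the homogeneity $|K^s(z)| \aleq |z|^{-n-s}$: for $s \in (0,n)$ non-integer this is the classical formula for $\laps{s}$ used already in the introduction, for $\nabla^s$ it is the same bound since composing with a Riesz transform does not change the size of the kernel off-diagonal, for $s=0$ it is the Calder\'on--Zygmund kernel $|z|^{-n}$ of the Riesz transform, and for $s<0$ it is the Riesz potential kernel $|z|^{-n-s} = |z|^{-n+|s|}$. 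In all cases the relevant point is the pointwise bound on $|K^s(z)|$ for $z \neq 0$, which is what the appendix reference supplies.

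Next I would estimate the first quantity. For $x \in \supp \cutA_l$ we have $|x| \sim 2^l R$ up to constants (this is the annulus $B_{2^lR}\setminus B_{2^{l-1}R}$), while for $y \in \supp \cut_k$ we have $|y| \le 2^k R \le 2^{l-1}R$; hence $|x-y| \gtrsim 2^l R \gtrsim 2^k R$, and more precisely $|x-y|$ is comparable to $|x|$. Therefore
\[
 |T^s[\cut_k f](x)| \aleq \int_{\supp \cut_k} |K^s(x-y)|\ |f(y)|\ dy \aleq (2^k R)^{-n-s}\ \vrac{\cut_k f}_1
\]
uniformly in $x \in \supp\cutA_l$, where I used $|x-y|^{-n-s} \aleq (2^kR)^{-n-s}$; taking the sup over $x$ gives the first inequality (with the constant absorbing the fixed powers of $R$, as is the convention for $\aleq$ in this paper). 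The second inequality is entirely symmetric: now $x \in \supp\cut_k$, so $|x|\le 2^kR$, while $y \in \supp\cutA_l$ forces $|y|\sim 2^lR$ and hence $|x-y| \gtrsim 2^l R$, giving $|K^s(x-y)| \aleq (2^lR)^{-n-s}$ and the bound $\vrac{\cut_k T^s[\cutA_l f]}_\infty \aleq (2^l R)^{-n-s}\ \vrac{\cutA_l f}_1$ after integrating in $y$ and taking the sup in $x$.

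The only genuine subtlety, and the step I expect to require the most care, is the case where $s$ is a positive even integer (or, for $\nabla^s$, an odd integer), so that $\laps{s}$ is a local differential operator and does not literally have a bounded convolution kernel. In that regime one must interpret the statement distributionally: since $\supp \cut_k$ and $\supp \cutA_l$ are disjoint for $l\ge k+1$, the contribution is governed by the "off-diagonal" part of the operator, which one handles either by writing $\laps{s}$ as $\laps{s-2m}$ composed with $\lap^m$ for a non-integer residual order, or by the pseudolocality estimates for the associated pseudodifferential operator; either way the off-diagonal kernel still obeys $|z|^{-n-s}$ decay and the argument above applies verbatim. Since the paper only ever uses this lemma for the specific operators appearing in the fractional gradient flow and cites the appendix of \cite{BPSknot12} for the routine verification, I would simply invoke that reference for the kernel bounds and present the two-line separation-of-supports estimate as above.
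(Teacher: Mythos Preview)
Your approach is exactly what the paper has in mind: it does not give a proof but says the estimate ``follows quite naturally from the potential definition of the involved operators'' and refers to the appendix of \cite{BPSknot12} for the kernel bounds, which is precisely the off-diagonal decay $|K^s(z)| \aleq |z|^{-n-s}$ combined with the separation-of-supports argument you wrote down. One small point: your claim $|x-y| \gtrsim 2^l R$ only holds once $l \geq k+2$ (for $l=k+1$ the annulus $\supp\cutA_l$ and the ball $\supp\cut_k$ share a boundary sphere, so $|x-y|$ is not bounded below), so either read the hypothesis as $l \geq k+2$ --- which is how the lemma is actually applied everywhere in the paper --- or add a one-line remark for the adjacent case.
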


The following estimate estimates an $L^p$-norm by an elliptic PDE. For the proof we refer to, e.g., \cite[Proposition A.3.]{BPSknot12}.
\begin{lemma}[Estimate by PDE]\label{la:EstimatebyPDE}
Let $f \in L^{p}(\R^n)$, then
\[
 \vrac{\cut_L f}_{p,B_R} \aleq \sup_{\varphi} \int_{\R^n} f \laps{s} \varphi + 2^{-\sigma K} \vrac{\cut_{L+K}f}_{p} + \sum_{k =1}^\infty 2^{-\sigma(K+l)}\ \vrac{\cut_{L+K+k} f}_{p},
\]
where the supremum is taken over all $\varphi \in C^\infty(\R^n)$ with $\supp \varphi \subset \supp \cut_{L}$ and $\vrac{\laps{s} \varphi}_{p'} \leq 1$.
\end{lemma}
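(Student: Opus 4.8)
The plan is to prove this by $L^{p}$–$L^{p'}$ duality, representing the dual function as $\laps{s}$ of a Riesz potential and then truncating that potential to the prescribed ball. First I would choose, by duality, $g\in L^{p'}$ with $\vrac{g}_{p'}\le 1$ and $\supp g\subset\supp\cut_{L}$ such that $\vrac{\cut_{L}f}_{p,B_R}\le\int_{\R^n}(\cut_{L}f)\,g$; the localization of $\supp g$ is for free since $\cut_{L}f$ vanishes outside $B_{2^{L}R}$. Set $\phi:=\lapms{s}g$, so that $\laps{s}\phi=g$ and, by the mapping properties of the Riesz potential, $\phi$ is controlled in $L^{q}$, $\tfrac1q=\tfrac1{p'}-\tfrac sn$ (resp.\ in $L^\infty$ when $s$ is large), with $\vrac{\phi}_{L^1(B_{2^LR})}\aleq(2^LR)^{n/p+s}$. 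Next I would fix a smooth cutoff $\eta$ with $\supp\eta\subset\supp\cut_{L}$, equal to $1$ on a fixed sub-ball containing $\supp g$, with $|\nabla^{j}\eta|\aleq(2^{L}R)^{-j}$, and put $\varphi:=\eta\phi$; then $\supp\varphi\subset\supp\cut_{L}$ and
\[
 \int_{\R^n}(\cut_{L}f)\,g=\int_{\R^n}f\,\laps{s}\varphi+\int_{\R^n}f\,\laps{s}\big((1-\eta)\phi\big)=:A+E .
\]

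For the main term $A$: since $\laps{s}\varphi=g-\laps{s}\big((1-\eta)\phi\big)=g-\laps{s}(\eta\phi)$, and both $g$ and $\laps{s}(\eta\phi)$ lie in $L^{p'}$ with norm $\aleq1$ — here one uses a Leibniz/commutator estimate for $\laps{s}$ together with $\eta\phi\in W^{s,p'}$ with controlled norm — we get $\vrac{\laps{s}\varphi}_{p'}\aleq1$, so after dividing $\varphi$ by a harmless constant it is admissible in the supremum and $A\aleq\sup_{\varphi}\int_{\R^n}f\,\laps{s}\varphi$. For the error $E$ I would decompose $f=\cut_{L+K}f+\sum_{k\ge1}\cutA_{L+K+k}f$. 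On each annulus $\cutA_{L+K+k}$ one has $(1-\eta)=1$ and $g=0$, so there $\laps{s}\big((1-\eta)\phi\big)=-\laps{s}(\eta\phi)$ with $\eta\phi$ supported at scale $\le 2^{L}R$; Lemma~\ref{la:local} applied to $T^{s}=\laps{s}$ (source scale $\le L$, target scale $L+K+k$), together with the bound on $\vrac{\eta\phi}_{1}$, gives $\vrac{\cutA_{L+K+k}\,\laps{s}\big((1-\eta)\phi\big)}_{p'}\aleq 2^{-(K+k)\sigma}$ for $\sigma=\tfrac np+s$ — all powers of $R$ cancelling — and Hölder produces the term $\sum_{k\ge1}2^{-\sigma(K+k)}\vrac{\cut_{L+K+k}f}_{p}$; the piece $\cut_{L+K}f$ is handled the same way on the intermediate dyadic shells between $\supp\cut_{L}$ and $\supp\cut_{L+K}$, yielding $2^{-\sigma K}\vrac{\cut_{L+K}f}_{p}$.

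The main obstacle is the non-locality of $\lapms{s}$ and $\laps{s}$: the potential $\phi=\lapms{s}g$ is not compactly supported, so truncating it is unavoidable, and the truncation error $\laps{s}\big((1-\eta)\phi\big)$ is itself spread over all of $\R^{n}$; the crux is the quantitative off-diagonal decay supplied by Lemma~\ref{la:local}, which makes this error geometrically small once one is a fixed number $K$ of dyadic scales away from $\supp\varphi$. A secondary point, elementary for the classical order $s=1$ but not for general $s\in(0,n)$, is keeping $\vrac{\laps{s}\varphi}_{p'}\aleq1$ after multiplication by $\eta$; this is where the Leibniz/commutator estimates for $\laps{s}$ are needed. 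Once these two points are in place the remainder is bookkeeping of the dyadic sums, entirely parallel to the proof of Lemma~\ref{la:lefthandside}.
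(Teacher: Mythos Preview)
The paper does not prove this lemma itself; it only cites \cite[Proposition~A.3]{BPSknot12}. Your outline --- duality to produce $g$, setting $\phi=\lapms{s}g$, truncating $\varphi=\eta\phi$, controlling $\vrac{\laps{s}\varphi}_{p'}$ via a Leibniz/commutator estimate, and treating the error with the off-diagonal decay of Lemma~\ref{la:local} --- is precisely the scheme used in that reference, so you are on the right track.

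There is, however, a genuine gap in your treatment of the near piece of the error. For the far annuli $\cutA_{L+K+k}$ your argument is correct. For $\cut_{L+K}f$ you assert that the intermediate shells between $\supp\cut_{L}$ and $\supp\cut_{L+K}$ ``yield $2^{-\sigma K}\vrac{\cut_{L+K}f}_{p}$''. They do not: on $\cutA_{L+j}$ Lemma~\ref{la:local} gives a factor $2^{-j\sigma}$, so summing over $j=1,\dots,K$ produces $\sum_{j}2^{-j\sigma}\vrac{\cutA_{L+j}f}_{p}\aleq\vrac{\cut_{L+K}f}_{p}$, \emph{without} the prefactor $2^{-\sigma K}$. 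More seriously, you have not touched the innermost contribution $\int(\cut_{L}f)\,\laps{s}\big((1-\eta)\phi\big)$ at all; a crude H\"older bound there returns only $C\,\vrac{\cut_{L}f}_{p}$, exactly the quantity you are trying to estimate. In the cited proof this is resolved by inserting a fixed dyadic separation between $\supp g$ and the set where $\eta<1$, which makes the inner error constant small enough to be absorbed into the left-hand side; you need the same device. A related minor point: you cannot have $\supp g=\supp\cut_{L}$, $\eta\equiv 1$ on $\supp g$, and $\supp\eta\subset\supp\cut_{L}$ for a smooth $\eta$ simultaneously --- one harmless dyadic shift (estimate $\vrac{\cut_{L-1}f}_{p}$ instead and relabel) cures this.
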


Let $\Rz = (\Rz_1,\ldots,\Rz_n)$ be the vector of all Riesz transforms. It is well-known that the norm $\vrac{f}_{p,\R^n}$ and $\vrac{\Rz f}_{p,\R^n}$ are equivalent for $p \in (1,\infty)$. This is certainly not true anymore if the norm is taken on strict subsets of $\R^n$. However Lemma~\ref{la:local} provides a comparison, which tells us that
\[
 \vrac{f}_{p,B_r}\quad  \mbox{more or less} \quad  \vrac{\Rz f}_{p,B_r},
\]
with error estimates. More precisely, we have Lemma~\ref{la:lapstoRiesz}, Lemma~\ref{la:RieszlapsII}, and Lemma~\ref{la:lapstoRieszII}. The main idea is always that $f = c\ \Rz_\beta \Rz_\beta f$; then one inserts the cutoff-functions $\cut_{i}$ and uses Lemma~\ref{la:lapstoRiesz}.

\begin{lemma}\label{la:lapstoRiesz}
Let $f \in L^p(\R^n)$ and $K \geq L \in \Z$, $M \geq 2$
\begin{align*}
 \|(\cut_{K} - \cut_L)f\|_{p} &\aleq  \|(\cut_{K+M}-\cut_{L-M})\ \Rz f \|_{p} \\
 & + \sum_{k=1}^\infty 2^{-(M+k)\frac{n}{p}}\ \|\cutA_{K+M+k}\ \Rz [f]\|_{p} \\
 & + 2^{(K-L)\frac{n}{p}-M\frac{n}{p'}}\  \|\cut_{L-M}\ \Rz [f]]\|_{p} \\ \\
\end{align*}
\end{lemma}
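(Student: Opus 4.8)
The plan is to use the identity $f = c\,\Rz_\beta\Rz_\beta f$ and insert cutoff functions, following the recipe announced in the paragraph preceding the statement. Write $(\cut_K - \cut_L)f = c\,(\cut_K - \cut_L)\Rz_\beta\Rz_\beta f$. The point is to move the cutoff ``past'' the inner Riesz transform at the cost of controllable error terms. I would split the inner $\Rz_\beta f$ according to a partition of scales: $\Rz_\beta f = \Rz_\beta[(\cut_{K+M} - \cut_{L-M})f] + \Rz_\beta[\cut_{L-M}f] + \sum_{k=1}^\infty \Rz_\beta[\cutA_{K+M+k}f]$, where the three groups are respectively the ``bulk'' near the relevant scales, the ``far-interior'' part, and the ``far-exterior'' tail.

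For the bulk term, apply the second Riesz transform $\Rz_\beta$ and the outer cutoff $(\cut_K - \cut_L)$ and use the global $L^p$-boundedness of $\Rz_\beta\Rz_\beta$: this contributes $\aleq \|(\cut_{K+M} - \cut_{L-M})\,\Rz f\|_p$, the first term on the right. For the far-exterior tail, each summand has $\cutA_{K+M+k}f$ supported outside $B_{2^{K+M}R}$, while after applying $\Rz_\beta$ we restrict to $\cut_K - \cut_L$ which is supported inside $B_{2^K R}$; these supports are separated, so Lemma~\ref{la:local} (with $s=0$, $T^0 = \Rz_\beta$) gives the pointwise bound $\vrac{(\cut_K - \cut_L)\Rz_\beta[\cutA_{K+M+k}f]}_\infty \aleq 2^{-(K+M+k)(n)}\vrac{\cutA_{K+M+k}f}_1$, and after multiplying by the measure of $B_{2^K R}$ and a H\"older step (or rather using $\Rz[f]$ rather than $f$ on the right, by again applying $f = c\Rz_\beta\Rz_\beta f$ inside) one gets the factor $2^{-(M+k)\frac{n}{p}}$; this yields the second (sum) term. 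The far-interior term $\cut_{L-M}f$ is handled symmetrically: $\cut_{L-M}f$ lives inside $B_{2^{L-M}R}$, the outer truncation keeps us in the annular region $B_{2^K R}\setminus B_{2^L R}$, again disjoint supports, so Lemma~\ref{la:local} applies with the roles reversed, producing the factor $2^{-(?)\frac{n}{p}}$ from the small support of the source and $2^{(K-L)\frac{n}{p}}$ from the measure of the annulus where we take the norm — this is exactly the third term $2^{(K-L)\frac{n}{p} - M\frac{n}{p'}}\|\cut_{L-M}\Rz[f]\|_p$.

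The main obstacle — more bookkeeping than conceptual — is keeping the exponents $\frac{n}{p}$ versus $\frac{n}{p'}$ straight when converting between the $L^1$-bound coming out of Lemma~\ref{la:local} and the $L^p$-norms on the right-hand side: one has to interpolate the $L^\infty$-pointwise decay against the measure of the region on which the left-hand norm is computed, and the split $\frac{n}{p} + \frac{n}{p'} = n$ of the decay rate $2^{-Mn}$ between the ``size of the source support'' and the ``size of the target region'' must be allocated correctly in each of the two disjoint-support terms. A secondary technical point is the replacement of $f$ by $\Rz f$ on the right-hand side of the tail terms, which is legitimate because $f = c\,\Rz_\beta\Rz_\beta f$ and $\Rz_\beta$ is bounded on $L^p$, but needs to be inserted before the final H\"older estimate so that the right-hand side is genuinely in terms of $\Rz f$ as stated.
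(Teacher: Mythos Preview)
Your overall strategy is the paper's: write $f=c\,\Rz_\beta\Rz_\beta f$, insert a three-piece partition of scales, and control the off-diagonal pieces with Lemma~\ref{la:local}. However, the decomposition you display is in the wrong place. You write
\[
\Rz_\beta f=\Rz_\beta[(\cut_{K+M}-\cut_{L-M})f]+\Rz_\beta[\cut_{L-M}f]+\sum_{k\ge1}\Rz_\beta[\cutA_{K+M+k}f],
\]
i.e.\ you cut off $f$ and then apply $\Rz_\beta$. With that choice the bulk term, after applying the outer $(\cut_K-\cut_L)\Rz_\beta$ and $L^p$-boundedness, yields $\|(\cut_{K+M}-\cut_{L-M})f\|_p$ (or at best $\|\Rz_\beta[(\cut_{K+M}-\cut_{L-M})f]\|_p$), \emph{not} $\|(\cut_{K+M}-\cut_{L-M})\Rz f\|_p$ as you claim. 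The ``secondary technical point'' you raise---converting back from $f$ to $\Rz f$ via $f=c\,\Rz_\beta\Rz_\beta f$---does not fix this: localized $L^p$-norms of $f$ and of $\Rz f$ are not directly comparable (that comparison is precisely the content of the lemma you are proving), so you would be arguing in a circle or piling on another round of localization tails.

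The correction is simple and is exactly what the paper does: insert the cutoffs \emph{between} the two Riesz transforms, i.e.\ decompose the function $\Rz_\beta f$ itself,
\[
\Rz_\beta f=(\cut_{K+M}-\cut_{L-M})\Rz_\beta f+\cut_{L-M}\Rz_\beta f+\sum_{k\ge1}\cutA_{K+M+k}\Rz_\beta f,
\]
and then apply the outer $(\cut_K-\cut_L)\Rz_\beta[\cdot]$. Now the bulk piece gives $\|(\cut_{K+M}-\cut_{L-M})\Rz f\|_p$ immediately by boundedness of $\Rz_\beta$, and the two remaining pieces are handled by Lemma~\ref{la:local} exactly as you describe, with all right-hand terms already in terms of $\Rz f$; your ``secondary technical point'' simply disappears. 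Your bookkeeping of the exponents $n/p$ versus $n/p'$ is then correct.
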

\begin{proof}
We have
\begin{align*}
 \|(\cut_{K} - \cut_L)\ f\|_{p}
 \aeq &\|(\cut_{K} - \cut_L)\ \Rz_\beta \Rz_\beta [f]\|_{p}\\
 \leq  &\|(\cut_{K} - \cut_L)\ \ \Rz_\beta [(\cut_{K+M}-\cut_{L-M})\ \Rz_\beta [f]]\|_{p} \\
& + \sum_{k=1}^\infty \|(\cut_{K} - \cut_L)\ \ \Rz_\beta [(\cutA_{K+M+k}\ \Rz_\beta [f]]\|_{p} \\
 & + \|(\cut_{K} - \cut_L)\ \ \Rz_\beta [(\cut_{L-M}\ \Rz_\beta [f]]\|_{p}.
\end{align*}
Now the claim follows from Lemma~\ref{la:local}
\end{proof}

\begin{lemma}\label{la:lapstoRieszII}
Let $f \in L^p(\R^n)$, 
\begin{align*}
 \|\cut_L f\|_{p} &\aleq  \|\cut_{L+S} \Rz [f]\|_{p} + \sum_{l=1}^\infty l\ 2^{-\frac{n}{p}(S+l)}\|\cutA_{S+L+l} f\|_{p}.
\end{align*}
\end{lemma}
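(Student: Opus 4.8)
The plan is to run the same ``$f=\Rz_\beta\Rz_\beta f$'' trick as in Lemma~\ref{la:lapstoRiesz}, but now to insert the cutoffs so that the \emph{outer} Riesz transform is always applied to a function localised far away from $B_{2^LR}$ and/or to reconstruct $f$ from $\Rz_\beta f$ only across neighbouring scales --- exactly the configurations in which the estimate of Lemma~\ref{la:local} is sharp. Throughout I normalise $R=1$; the general case follows by the obvious rescaling.

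Writing $f=-\Rz_\beta\Rz_\beta f$ (summation over $\beta$) and splitting the inner factor $\Rz_\beta f$ at scale $L+S$,
\[
 \cut_L\Rz_\beta\Rz_\beta f=\cut_L\Rz_\beta[\cut_{L+S}\Rz_\beta f]+\sum_{l=1}^\infty \cut_L\Rz_\beta[\cutA_{L+S+l}\Rz_\beta f].
\]
For the first summand $L^p$-boundedness of $\Rz_\beta$ gives at once $\vrac{\cut_L\Rz_\beta[\cut_{L+S}\Rz_\beta f]}_p\aleq\vrac{\cut_{L+S}\Rz f}_p$, which is the first term of the claim. For the $l$-th tail the argument $\cutA_{L+S+l}\Rz_\beta f$ is supported outside $B_{2^{L+S+l-1}}$, hence at distance $\aeq 2^{L+S+l}$ from $B_{2^L}$, so Lemma~\ref{la:local} (second estimate, $s=0$) together with H\"older on $B_{2^L}$ yields
\[
 \vrac{\cut_L\Rz_\beta[\cutA_{L+S+l}\Rz_\beta f]}_p\aleq (2^L)^{\frac np}\,(2^{L+S+l})^{-n}\,\vrac{\cutA_{L+S+l}\Rz_\beta f}_1 .
\]

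It remains to estimate $\vrac{\cutA_m\Rz_\beta f}_1$ with $m:=L+S+l$, which I do by decomposing $f=\cut_m f+\sum_{j\ge1}\cutA_{m+j}f$: for the inner piece $L^p$-boundedness gives $\vrac{\cutA_m\Rz_\beta[\cut_m f]}_1\aleq (2^m)^{\frac n{p'}}\vrac{\cut_m f}_p$, while for $j\ge1$ the source annulus lies $j$ scales further out, so Lemma~\ref{la:local} gives $\vrac{\cutA_m\Rz_\beta[\cutA_{m+j}f]}_1\aleq 2^{-jn}(2^{m+j})^{\frac n{p'}}\vrac{\cutA_{m+j}f}_p$. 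Substituting back, the exponents of $2$ collapse and one obtains
\[
 \vrac{\cut_L\Rz_\beta[\cutA_m\Rz_\beta f]}_p\aleq 2^{-\frac np(S+l)}\vrac{\cut_{L+S+l}f}_p+2^{-\frac np(S+l)}\sum_{j\ge1}2^{-\frac np j}\vrac{\cutA_{L+S+l+j}f}_p .
\]

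Finally I collect. Writing $\cut_{L+S+l}=\cut_L+\sum_{i=1}^{S+l}\cutA_{L+i}$ and summing the first right-hand term over $l\ge1$, the coefficient of the self-referential term $\vrac{\cut_L f}_p$ is $\aleq\sum_{l\ge1}2^{-\frac np(S+l)}\aleq 2^{-\frac np S}$, which for $S$ large is $<1$ and is absorbed into the left-hand side; this absorption is the only genuinely delicate point, and it is what forces every application of Lemma~\ref{la:local} above to be kept in its sharp ``far source / near target'' regime so that the compounded decay $2^{-\sigma S}$ really appears. After interchanging the order of summation the remaining contributions regroup into the annular terms $\vrac{\cutA_{L+S+l}f}_p$; in the double sum the pairs $(l,j)$ with $l+j$ fixed number $\aeq l$, which is exactly the origin of the weight $l$ in $\sum_{l\ge1}l\,2^{-\frac np(S+l)}\vrac{\cutA_{L+S+l}f}_p$, and the finitely many intermediate annuli at scales $L+1,\dots,L+S$ carry the harmless factor $2^{-\frac np S}$ and are reabsorbed in the same fashion. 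The main obstacle is therefore not any individual estimate but organising this triple bookkeeping consistently.
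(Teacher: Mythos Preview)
Your argument is the paper's own: write $f=-\Rz_\beta\Rz_\beta f$, split the inner $\Rz_\beta f$ at scale $L+S$, apply Lemma~\ref{la:local} to the far annuli, and then decompose $f$ once more to strip the remaining Riesz transform. The paper stays in $L^p$ throughout (bounding $\|\cutA_{S+l}\Rz f\|_p$ directly by $\|\cut_{S+l+1}f\|_p+\sum_{k\ge2}2^{-kn/p}\|\cutA_{S+l+k}f\|_p$), whereas you detour through the $L^\infty$--$L^1$ form of Lemma~\ref{la:local} and H\"older back; the two routes are equivalent. In fact the paper's proof stops after reorganising the double sum and never returns to the full-ball term $\sum_{l\ge1}2^{-\frac np(S+l)}\|\cut_{S+l+1}f\|_p$ at all, so your explicit absorption of the self-referential $\|\cut_L f\|_p$ piece (for $S$ large enough) is a genuine improvement over what is written there.

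One caution on your last sentence: the ``intermediate annuli'' $\|\cutA_{L+i}f\|_p$ for $i=1,\dots,S$ are supported \emph{outside} $B_{2^LR}$, so they cannot literally be ``reabsorbed in the same fashion'' into the left-hand side $\|\cut_L f\|_p$. This point is equally glossed over in the paper. In every application (cf.\ the proof of Lemma~\ref{la:lefthandside}) it is harmless, since $S$ is chosen as a fixed multiple of the other scale parameters and these finitely many terms are swallowed by the ``good terms''; but for a clean standalone proof you should either rewrite the tail so that the annular sum starts at scale $L+1$ rather than $L+S+1$, or iterate the pre-absorption estimate once more at the intermediate scales rather than invoke absorption.
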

\begin{proof}
W.l.o.g., $L=0$. Again we have
\begin{align*}
 \|\cut_0 f\|_{p} &
  \leq \|\cut_0 \Rz_\beta [\cut_S \Rz_\beta [f]]\|_{p} + \sum_{l=1}^\infty \|\cut_0 \Rz_\beta [\cutA_{S+l} \Rz_\beta [f]]\|_{p}\\
 &\aleq \|\cut_S \Rz [f]\|_{p} + \sum_{l=1}^\infty 2^{-\frac{n}{p}(S+l)}\|\cutA_{S+l} \Rz [f]\|_{p}.
\end{align*}
We still need to remove the $\Rz[\cdot]$ in the second term.
\begin{align*}
\|\cutA_{S+l} \Rz [f]\|_{p} &\aleq
\|\cutA_{S+l} \Rz [\cut_{S+l+1} f]\|_{p} + \sum_{k=2}^\infty \|\cut_{S+l} \Rz [\cutA_{S+l+k} f]\|_{p}\\
&\aleq \|\cut_{S+l+1} f\|_{p} + \sum_{k=2}^\infty 2^{-k\frac{n}{p}}\|\cutA_{S+l+k}f\|_{p}.
\end{align*}
Consequently,
\begin{align*}
 \|\cut_0 f\|_{p} &\aleq \|\cut_S \Rz [f]\|_{p} + \sum_{l=1}^\infty 2^{-\frac{n}{p}(S+l)}  \|\cut_{S+l+1} f\|_{p} + \sum_{l=1}^\infty 2^{-\frac{n}{p}(S+l)} \sum_{k=2}^\infty 2^{-k\frac{n}{p}}\|\cutA_{S+l+k}f\|_{p}.
\end{align*}
Finally, we observe
\begin{align*}
 &\sum_{l=1}^\infty 2^{-\frac{n}{p}(S+l)} \sum_{k=2}^\infty 2^{-k\frac{n}{p}}\|\cutA_{S+l+k}f\|_{p}\\
 =&\sum_{l=1}^\infty 2^{-\frac{n}{p}(S+l)} \sum_{k=l+2}^\infty 2^{-(k-l)\frac{n}{p}}\|\cutA_{S+k}f\|_{p}\\
 =&\sum_{k=3}^\infty 2^{-(k+S)\frac{n}{p}}\ \|\cutA_{S+k}f\|_{p} \sum_{l=1}^{k-2} 1\\
 \leq &\sum_{k=3}^\infty (k-2)\ 2^{-(k+S)\frac{n}{p}}\ \|\cutA_{S+k}f\|_{p}.
\end{align*}
\end{proof}

By the same arguments as above, we also have
\begin{lemma}\label{la:RieszlapsII}
Let $f \in L^p(\R^n)$, $K \in \mathbb{Z},\ L \in \mathbb{N}$
\[
 \|\cut_{L}\Rz f\|_{p} \leq C\ \|\cut_{L+K} f \|_{p} + C\ \sum_{k=1}^\infty 2^{-\frac{n}{p}(K+k)} \|\cutA_{L+K+k} f \|_{p}.
\]
\end{lemma}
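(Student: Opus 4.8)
The plan is to prove Lemma~\ref{la:RieszlapsII} exactly as the preceding two lemmas (Lemma~\ref{la:lapstoRiesz} and Lemma~\ref{la:lapstoRieszII}), namely by writing the operator as a composition of two Riesz transforms, inserting the characteristic-function cutoffs $\cut_i$ between them, and estimating the resulting ``off-diagonal'' pieces with the pseudolocality estimate Lemma~\ref{la:local} (applied with $s = 0$, i.e.\ $T^0 = \Rz_\beta$). The key algebraic identity is that $\Rz_\beta \Rz_\beta = c\,\mathrm{Id}$ for a suitable constant (summing over $\beta$), so that $\cut_L \Rz f$ can be rewritten using $f = c\,\Rz_\beta \Rz_\beta f$ after a cutoff-insertion on the \emph{inner} copy.

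Concretely, I would first reduce to $L = 0$ by scale-invariance of the whole setup (all the $\cut_i$ and the operator behave covariantly under the dyadic shift $i \mapsto i + L$). Then I write
\[
 \cut_0 \Rz_\beta f = \cut_0 \Rz_\beta[\cut_{K} f] + \sum_{k=1}^\infty \cut_0 \Rz_\beta[\cutA_{K+k} f],
\]
using $1 = \chi_{B_{2^{K}R}} + \sum_{k\ge 1}(\chi_{B_{2^{K+k}R}} - \chi_{B_{2^{K+k-1}R}})$ a.e. The first summand is bounded by $\|\cut_K f\|_p$ by the $L^p$-boundedness of $\Rz_\beta$ (no cutoff interaction since the outer $\cut_0$ only shrinks the region). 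For each tail term $k \ge 1$, I apply the second estimate of Lemma~\ref{la:local} with the roles $\cut_0$ for the small ball and $\cutA_{K+k}$ for the far annulus (here $K+k \ge 0+1$, so the hypothesis $l \ge k+1$ of that lemma is met): this gives $\|\cut_0 \Rz_\beta[\cutA_{K+k} f]\|_\infty \aleq (2^{K+k}R)^{-n}\|\cutA_{K+k} f\|_1$, and then H\"older on the fixed ball $B_R$ (where $\cut_0$ is supported) converts the $L^1$-norm and the $L^\infty$-bound into the desired $2^{-\frac{n}{p}(K+k)}\|\cutA_{K+k} f\|_p$, summable in $k$. Summing over $k$ and unshifting the indices by $+L$ yields precisely the claimed inequality.

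I do not expect a genuine obstacle here: this is the same three-line template already executed twice in the appendix, and the only thing to be careful about is bookkeeping — which cutoff plays the ``near'' role and which the ``far'' role in each invocation of Lemma~\ref{la:local}, and checking the index condition $l \ge k+1$ is satisfied (it is, since after reducing to $L=0$ the inner annuli live at scales $\ge 1$ while the outer cutoff lives at scale $0$). The dyadic geometric factor $2^{-\frac{n}{p}(K+k)}$ is exactly what Lemma~\ref{la:local} produces (with $s=0$), so no loss or gain of exponent occurs, and the $C$ in the statement absorbs the fixed number of summands $\beta = 1,\dots,n$ and the constant from $\Rz_\beta\Rz_\beta = c\,\mathrm{Id}$.
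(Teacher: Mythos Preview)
Your proof is correct and matches the paper's intended argument (the paper simply says ``by the same arguments as above'' and gives no separate proof). One minor remark: the identity $\Rz_\beta\Rz_\beta = c\,\mathrm{Id}$ that you mention in your opening paragraph is actually not needed here --- it was essential in Lemmas~\ref{la:lapstoRiesz} and~\ref{la:lapstoRieszII} to pass from a localized $f$-norm to a localized $\Rz f$-norm, but in the present direction you already have $\Rz_\beta$ acting on $f$, so the direct decomposition $f = \cut_K f + \sum_{k\ge 1}\cutA_{K+k} f$ inside the operator (which is exactly what your ``Concretely'' paragraph does) suffices without any second Riesz transform. Also note that your index check $K+k \ge 1$ tacitly assumes $K \ge 0$; the lemma statement allows $K \in \mathbb{Z}$, but in every application in the paper $K$ is a large positive integer, so this is harmless.
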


\section{BMO-Estimates}
In the estimates we sometimes have to estimate the BMO of $\cut (u-(u))$. Let us recall the definition of the pseudo-norm for $BMO$:
\[
 [f]_{BMO} := \sup_{B_r \subset \R^n} |B_r|^{-1} \int_{B_r} \left |f(x) -  |B_r|^{-1} \int_{B_r} f(y) dy \right |\ dx.
\]

We will use the notation \eqref{eq:cutdef}. For simplicity of presentation, we assume throughout this section that $\cut_0 = \chi_{B_1(0)}$.

Firstly, we have the following
\begin{proposition}\label{pr:BMOestmain}
 \begin{align*}
 \sup_{B_r \subset B_2(0), r \leq 1} r^{-2n} \int_{B_r} \int_{B_r} |u(x)-u(y)| +
 \sup_{B_r \subset B_2(0), r \leq 1} r^{-n} \int_{B_r} \int_{B_1} |u(x)-u(y)| \\
 \aleq \vrac{\cut_{B_{2^K}(0)}|\laps{s} u|}_{p_s} + \sum_{l=1}^\infty 2^{-(K+l)} \vrac{\cut_{B_{2^{K+l}}(0)}\laps{s} u }_{p_s}
 \end{align*}
\end{proposition}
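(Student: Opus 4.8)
The plan is to estimate the two averaged oscillation quantities directly, by decomposing $u$ on the relevant dyadic annuli and using that $\laps{s}$ is, up to a Riesz potential, invertible. First I would observe that for $B_r \subset B_2(0)$ with $r \leq 1$, the quantity $r^{-2n}\int_{B_r}\int_{B_r}|u(x)-u(y)|\,dx\,dy$ is comparable to $r^{-n}\int_{B_r}|u - (u)_{B_r}|$, and similarly the second term is controlled by oscillation of $u$ between $B_r$ and $B_1$; this is the standard reduction of the $BMO$-pseudonorm to a double-integral form, done here on a fixed-scale family of balls rather than all of $\R^n$. The point of keeping the double-integral form is that differences $u(x)-u(y)$ are insensitive to adding constants, so we may freely write $u = \lapms{s}\laps{s}u$ (up to the normalizing constant) and only the ``difference of Riesz potentials'' matters.

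Next I would split $\laps{s} u = \cut_K \laps{s} u + \sum_{l=1}^\infty \cutA_{K+l}\laps{s} u$ for a suitable fixed $K$ (with $\cut_0 = \chi_{B_1(0)}$, so $\cut_K = \chi_{B_{2^K}(0)}$), and correspondingly $u = \lapms{s}[\cut_K \laps{s} u] + \sum_{l} \lapms{s}[\cutA_{K+l}\laps{s}u]$ up to a constant. For the near piece $\lapms{s}[\cut_K\laps{s}u]$, the oscillation over $B_r \subset B_2(0)$ is estimated by the standard mapping property $\lapms{s}: L^{p_s} \to $ (a Campanato/$BMO$-type space) — concretely $\lapms{s}$ of an $L^{n/s}$ function lies in $BMO$, with norm $\aleq \vrac{\cut_K \laps{s}u}_{p_s}$ — which gives the first term on the right-hand side. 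For each far piece $\lapms{s}[\cutA_{K+l}\laps{s}u]$, the source is supported in the annulus $B_{2^{K+l}}\setminus B_{2^{K+l-1}}$ while we test oscillation on $B_r \subset B_2(0)$, so Lemma~\ref{la:local} (with $s<0$, i.e.\ $T^s = \lapms{s}$) applies: $\vrac{\cut_0 \lapms{s}[\cutA_{K+l} g]}_\infty \aleq 2^{(K+l)(-n+s)}\vrac{\cutA_{K+l}g}_1$, and then $\vrac{\cutA_{K+l}\laps{s}u}_1 \aleq 2^{(K+l)n/p_s'}\vrac{\cutA_{K+l}\laps{s}u}_{p_s}$ by H\"older on the annulus; combining, each far piece contributes an $L^\infty$ (hence oscillation) bound $\aleq 2^{(K+l)(s - n/p_s)}\vrac{\cutA_{K+l}\laps{s}u}_{p_s} = 2^{-(K+l)\cdot 0}\cdots$ — here one must check the exponent arithmetic. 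Since $p_s = n/s$ one has $s - n/p_s = s - s = 0$, which is too weak, so I would instead exploit the \emph{difference} $\lapms{s}g(x) - \lapms{s}g(y)$ for $x,y \in B_r$ with $r\leq 1$: the kernel difference $|x-z|^{-(n-s)} - |y-z|^{-(n-s)}$ for $|x-y|\leq 1 \ll 2^{K+l} = |z|$ gains an extra factor $\aleq 2^{(K+l)(-(n-s)-1)}|x-y|$, producing the decisive extra $2^{-(K+l)}$ in the stated bound (and the integration of $|x-y|$ over $B_r\times B_r$ against $r^{-2n}$ or $r^{-n}$ is harmless since $r\leq 1$).

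Finally I would sum the geometric-type series $\sum_{l\geq 1} 2^{-(K+l)}\vrac{\cut_{K+l}\laps{s}u}_{p_s}$ (absorbing the annular pieces $\cutA_{K+l}$ into the full balls $\cut_{K+l}$ at the cost of the constant) to reach exactly the right-hand side. The main obstacle is the exponent bookkeeping in the far-piece estimate: the naive $L^\infty$ bound from Lemma~\ref{la:local} is scale-critical (gains nothing, because $p_s$ is precisely the critical exponent $n/s$), so the argument genuinely needs the extra cancellation from estimating the \emph{difference} of Riesz potentials rather than the potential itself — this is where the factor $2^{-(K+l)}$ and the Lipschitz-type gain in the kernel come from, and getting that gain cleanly, uniformly over all $B_r \subset B_2(0)$ with $r\leq 1$, is the crux.
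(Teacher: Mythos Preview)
Your proposal is correct and follows essentially the same route as the paper. The paper also writes $|u(x)-u(y)| \aleq \int ||x-z|^{s-n}-|y-z|^{s-n}|\,|\laps{s}u(z)|\,dz$, splits $\laps{s}u = \cut_K\laps{s}u + \sum_{l\geq 1}\cutA_{K+l}\laps{s}u$, and for the far pieces uses precisely the mean-value/Lipschitz gain $||x-z|^{s-n}-|y-z|^{s-n}|\aleq 2^{(K+l)(s-n-1)}|x-y|$ (for $x,y\in B_2$, $z\in\supp\cutA_{K+l}$) to produce the extra $2^{-(K+l)}$ after H\"older on the annulus --- exactly the cancellation you identify as the crux. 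The only difference is cosmetic: for the near piece $\cut_K\laps{s}u$ you invoke the abstract embedding $\lapms{s}:L^{n/s}\to BMO$, whereas the paper carries out the pointwise three-term kernel decomposition of $||x-z|^{s-n}-|y-z|^{s-n}|$ by hand (citing \cite{Sfracenergy}, \cite{BPSknot12}) and integrates directly; the explicit computation has the advantage of treating the second supremum $r^{-n}\int_{B_r}\int_{B_1}$ on the same footing, which is not literally a $BMO$-seminorm.
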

\begin{proof}
By definition of the Riesz-potential \[
                                      \lapms{s} f(x) = c\ \int |z-x|^{s-n} f(z),
                                     \]
and since $\lapms{s} \laps{s} = Id$, we have for any $K \geq 3$
\begin{align*}
 |u(x)-u(y)| &\aleq \int_{\R^n} ||x-z|^{s-n} - |y-z|^{s-n}|\ |\laps{s} u(z)|\ dz\\
 &= \int_{\R^n} ||x-z|^{s-n} - |y-z|^{s-n}|\ \cut_{K}(z)|\laps{s} u(z)|\ dz\\
 &\quad + \sum_{l=1}^\infty \int_{\R^n} ||x-z|^{s-n} - |y-z|^{s-n}|\ \cutA_{K+l}(z)|\laps{s} u(z)|\ dz\\
 &=: I(x,y) + II(x,y).
\end{align*}
As for $II(x,y)$: for $x,y \in B_2(0)$, and $z \in \supp \cutA_{K+l}$ we have
\[
 ||x-z|^{s-n} - |y-z|^{s-n}| \aleq 2^{K(s-n-1))}|x-y| \leq 2^{K(s-n-1)} |x-y|,
\]
that is, since $p_s' = \frac{n}{n-s}$,
\[
 II(x,y) \leq |x-y|\ 2^{-(K+l)} \vrac{\cut_{K+l}\laps{s} u }_{p_s}.
\]
Consequently,
\begin{align*}
 &r^{-2n} \int_{B_r} \int_{B_r} |II(x,y)| + r^{-n} \int_{B_r} \int_{B_1} |II(x,y)|\\
 &\aleq \sum_{l=1}^\infty 2^{-(K+l)} \vrac{\cut_{K+l}\laps{s} u }_{p_s}
\end{align*}
It remains to estimate $I$. As in \cite{Sfracenergy}, see also the presentation in \cite[Lemma A.4.]{BPSknot12}, we have for almost any $x,y,z$,
\begin{align*}
 ||x-z|^{s-n} - |y-z|^{s-n}| \leq &\chi_{|x-y| \aleq |x-z|,|y-z|} |x-y|^1 \min\{|y-z|^{s-n},|x-z|^{s-n}\}\\
 &\quad + \chi_{|y-z| < |x-y|} |y-z|^{s-n}\\
 &\quad + \chi_{|x-z| < |x-y|} |x-z|^{s-n}.
\end{align*}
Thus,
\begin{align*}
  &r^{-2n} \int_{B_r} \int_{B_r} |I(x,y)|\\
  \aleq & r^{-2n}\ \int_{B_r} \int_{B_r} |x-y| \int_{z:|x-z|>|x-y|}\ |x-z|^{s-1-n}  \cut_{K}(z)|\laps{s} u(z)|\ dz\ dy\ dx\\
  & + 2r^{-2n}\ \int_{B_r} \int_{B_{3r}} \int_{B_r \cap |y-z| < |x-y| < 2r}  \ |y-z|^{s-n}  \cut_{K}(z)|\laps{s} u(z)|\ dy\ dz\ dx\\
  \aleq & r^{-2n}\ \int_{B_r} \int_{B_r} |x-y| \brac{\int_{z:|x-z|>|x-y|}\ |x-z|^{(s-1-n)\frac{n}{n-s}}}^{\frac{1}{p_s'}} \vrac{\cut_{K}|\laps{s} u|}_{p_s}\ dy\ dx\\
  & + r^{s-n}\ \int_{B_{3r}}  \cut_{K}(z)|\laps{s} u(z)|\  dz\\
   \aleq & r^{-2n}\ \int_{B_r} \int_{B_r} |x-y|\ |x-y|^{-1}\ \vrac{\cut_{K}|\laps{s} u|}_{p_s}\ dy\ dx\\
  & + r^{s-n}\ \int_{B_{3r}}  \cut_{K}(z)|\laps{s} u(z)|\  dz\\
   \aleq & r^{-2n}\ \int_{B_r} \int_{B_r} |x-y|\ |x-y|^{-1}\ \vrac{\cut_{K}|\laps{s} u|}_{p_s}\ dy\ dx\\
  & + r^{s-n}\ \int_{B_{3r}}  \cut_{K}(z)|\laps{s} u(z)|\  dz\\
  \aleq & \vrac{\cut_{K}|\laps{s} u|}_{p_s}.
\end{align*}

\end{proof}

\begin{lemma}[BMO Estimate]\label{la:BMOest}
For any $u$ and $L$ there exists a constant $c \in \R$ such that
\[
[\cut_L (u-c)]_{BMO} \leq C\ \vrac{\cut_{L+K}|\laps{s} u|}_{p_s} + C\ \sum_{l=1}^\infty 2^{-(K+l)} \vrac{\cut_{L+K+l}\laps{s} u }_{p_s}
\]
\end{lemma}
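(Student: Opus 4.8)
The statement to prove is the BMO estimate in Lemma~\ref{la:BMOest}: for each ball scale $L$ one wants a constant $c$ with
\[
 [\cut_L (u-c)]_{BMO} \leq C\ \vrac{\cut_{L+K}|\laps{s} u|}_{p_s} + C\ \sum_{l=1}^\infty 2^{-(K+l)} \vrac{\cut_{L+K+l}\laps{s} u }_{p_s}.
\]
The natural approach is to deduce this directly from Proposition~\ref{pr:BMOestmain}, which already controls the double integrals $r^{-2n}\int_{B_r}\int_{B_r}|u(x)-u(y)|$ and $r^{-n}\int_{B_r}\int_{B_1}|u(x)-u(y)|$ by exactly the right-hand side. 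So the core of the argument is an elementary reduction: the $BMO$ seminorm of $\cut_L(u-c)$ is comparable to a supremum of such oscillation averages. By translation and scaling invariance we may assume, as stated in that section, that $\cut_0 = \chi_{B_1(0)}$, and reduce to $L = 0$ (shifting all indices by $+L$ at the end, since nothing in the argument depends on the scale).

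\textbf{Key steps.} First I would choose $c := |B_1|^{-1}\int_{B_1} u$, the mean of $u$ over the reference ball, so that $\cut_0(u-c)$ is the function $u-c$ on $B_1$ and zero outside. Next, fix an arbitrary ball $B_r \subset \R^n$ entering the $BMO$-supremum for $\cut_0(u-c)$; I would split into cases according to how $B_r$ meets $B_1$. If $B_r$ does not intersect $B_1$, then $\cut_0(u-c)\equiv 0$ on $B_r$ and its oscillation average is zero, contributing nothing. If $B_r \cap B_1 \neq \emptyset$ but $B_r$ has large radius (say $r \geq 1$), then $\cut_0(u-c)$ is supported in $B_r \cap B_1 \subset B_1$ and vanishes on a definite fraction of $B_r$; here the standard inequality $|B_r|^{-1}\int_{B_r}|g - (g)_{B_r}| \leq 2|B_r|^{-1}\int_{B_r}|g|$ bounds the average by $r^{-n}\int_{B_1}|u-c| \leq \int_{B_1}\int_{B_1}|u(x)-u(y)|\,dx\,dy \cdot |B_1|^{-1}$, which is exactly (up to constants, since $r\geq 1$) controlled by the mixed term $r^{-n}\int_{B_r}\int_{B_1}|u(x)-u(y)|$ from Proposition~\ref{pr:BMOestmain} (with $B_1$ enlarged to $B_2$ as needed). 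The essential case is $B_r \subset B_2(0)$ with $r \leq 1$: if $B_r$ stays inside $B_1$ then $\cut_0(u-c) = u-c$ on $B_r$ and $|B_r|^{-1}\int_{B_r}|u-(u)_{B_r}| \leq |B_r|^{-1}\int_{B_r}\int_{B_r}|u(x)-u(y)|\,dx\,dy$ is directly the first supremand in Proposition~\ref{pr:BMOestmain}; if $B_r$ straddles $\partial B_1$, one again uses that $\cut_0(u-c)$ vanishes on part of $B_r$, splits $\int_{B_r}|\cut_0(u-c) - (\cut_0(u-c))_{B_r}|$ into the part where $u-c$ appears and the part where $0$ appears, and controls the resulting cross terms by $r^{-n}\int_{B_r}|u - c|$; replacing $c = (u)_{B_1}$ and using the triangle inequality $|u(x)-c| \leq |B_1|^{-1}\int_{B_1}|u(x)-u(y)|\,dy$, this is bounded by $r^{-n}\int_{B_r}\int_{B_1}|u(x)-u(y)|\,dx\,dy$, the second supremand. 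Taking the supremum over all admissible $B_r$ and invoking Proposition~\ref{pr:BMOestmain} yields the claim; finally shift indices by $+L$.

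\textbf{Main obstacle.} There is no deep obstacle here — Proposition~\ref{pr:BMOestmain} does all the hard analytic work (the Riesz-potential splitting and the kernel estimate $||x-z|^{s-n}-|y-z|^{s-n}| \lesssim |x-y|^1 \min\{\cdots\}$), and the decay factors $2^{-(K+l)}$ in the tail sum come out of it directly. The only points requiring genuine care are bookkeeping ones: (i) matching the normalization $\cut_0 = \chi_{B_1(0)}$ and the region $B_2(0)$ appearing in Proposition~\ref{pr:BMOestmain} with the $BMO$-supremum over \emph{all} balls $B_r \subset \R^n$ in the definition of $[\cdot]_{BMO}$ — one must verify that balls reaching outside $B_2(0)$ contribute nothing new because $\cut_0(u-c)$ is compactly supported in $B_1$; and (ii) checking that the choice $c = (u)_{B_1}$ is simultaneously good for \emph{all} scales $B_r$, which is exactly why Proposition~\ref{pr:BMOestmain} was phrased with the mixed term $\int_{B_r}\int_{B_1}$ rather than $\int_{B_r}\int_{B_r}$. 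Handling the boundary-straddling balls cleanly is the most delicate of these routine steps, but it is a standard localization argument for $BMO$ of a truncated function.
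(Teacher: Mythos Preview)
Your proposal is correct and follows essentially the same approach as the paper: reduce to $L=0$, choose $c=(u)_{B_1}$, and bound $[\cut_0(u-c)]_{BMO}$ by the two double-integral supremands of Proposition~\ref{pr:BMOestmain}, then invoke that proposition. The paper compresses your case analysis into a single pointwise inequality $|\cut_0(x)(u(x)-c)-\cut_0(y)(u(y)-c)| \leq |u(x)-c|+|u(x)-u(y)|+|u(y)-c|$ together with $|u(x)-c|\leq |B_1|^{-1}\int_{B_1}|u(x)-u(y)|\,dy$, but the underlying argument is identical.
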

\begin{proof}
Again we may assume that $L=0$. Let $c:=(u)_0$ is then the mean value of $u$ over $B_1(0)$.
\begin{align*}
 &[\cut_0 (u-(u)_0)]_{BMO}\\
  \aleq& \sup_{B_r \subset B_2(0), r \leq 1} r^{-2n} \int_{B_r} \int_{B_r} |\cut_0(x)(u(x)-(u)_0)-\cut_0(y)(u(y)-(u)_0)|\\
  &+ \sup_{B_r \subset B_2(0), r \leq 1} r^{-n} \int_{B_r} \int_{B_1(0)} |u(x)-u(y)|.
\end{align*}
Since
\begin{align*}
 &|\cut_0(x)(u(x)-(u)_0)-\cut_0(y)(u(y)-(u)_0)|\\
 \leq\ & |u(x)-(u)_0|+|u(x)-u(y)| + |u(y)-(u)_0|.
\end{align*}
this becomes
\begin{align*}
 &[\cut_0 (u-(u)_0]_{BMO}\\
   \aleq& \sup_{B_r \subset B_2(0), r \leq 1} r^{-2n} \int_{B_r} \int_{B_r} |u(x)-u(y)|\\
  &+ \sup_{B_r \subset B_2(0), r \leq 1} r^{-n} \int_{B_r} \int_{B_1(0)} |u(x)-u(y)|.
\end{align*}
Now the claim follows from Proposition~\ref{pr:BMOestmain}.
\end{proof}

\section{Decompositon for SO(N)}
In \eqref{eq:nablaudecomp} we decompose vectors into orthogonal and tangential part along $u \in S^{N-1}$. The analogue for $Q \in SO(N)$ is the following
\begin{proposition}\label{pr:decompSO}
There exists a uniform constant $C > 0$ such that the following holds for any matrix $A \in \R^{N \times N}$, for any $Q \in SO(N)$,
\[
 |A| \leq \max_{\omega} |\omega_{ij} Q^{ik}A_{kj}| |+ \max_{\sigma} |\theta_{ij} Q^{ik} A_{kj}| 
\]
where the maximum is taken on all finitely many $\omega \in \{-1,0,1\}^{N \times N} \in so(N)$ and $\sigma \in \{0,1\}^{N \times N} \in sym(N)$, respectively.
\end{proposition}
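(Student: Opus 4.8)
The plan is to reduce the statement to elementary linear algebra on $\R^{N\times N}$ with the Frobenius inner product $\langle X,Y\rangle := X_{ij}Y_{ij}$, $|X|^2 := \langle X,X\rangle$. First I would note that $\omega_{ij}Q^{ik}A_{kj} = \langle \omega, QA\rangle$ and $\theta_{ij}Q^{ik}A_{kj} = \langle \theta, QA\rangle$ (with the paper's index convention), and that since $Q \in SO(N)$ is orthogonal one has $|QA| = |A|$. Writing $B := QA$, the inequality to prove is therefore equivalent to the following statement for an \emph{arbitrary} matrix $B$: there is $C=C_N$ with
\[
 |B| \leq C\brac{\max_{\omega}|\langle \omega, B\rangle| + \max_{\theta}|\langle \theta, B\rangle|},
\]
where $\omega$ runs over the finitely many antisymmetric matrices with entries in $\{-1,0,1\}$ and $\theta$ over the symmetric matrices with entries in $\{0,1\}$.

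Next I would split $B = B_{\mathrm a} + B_{\mathrm s}$ into its antisymmetric part $B_{\mathrm a} := \tfrac12(B-B^T) \in so(N)$ and its symmetric part $B_{\mathrm s} := \tfrac12(B+B^T)\in sym(N)$. Since $so(N) \perp sym(N)$ in the Frobenius inner product, we get $|B|^2 = |B_{\mathrm a}|^2 + |B_{\mathrm s}|^2$, hence $|B| \leq |B_{\mathrm a}| + |B_{\mathrm s}|$; moreover $\langle \omega, B\rangle = \langle \omega, B_{\mathrm a}\rangle$ for every antisymmetric $\omega$ and $\langle \theta, B\rangle = \langle \theta, B_{\mathrm s}\rangle$ for every symmetric $\theta$. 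So it suffices to show $|B_{\mathrm a}| \aleq \max_\omega |\langle \omega, B_{\mathrm a}\rangle|$ and $|B_{\mathrm s}| \aleq \max_\theta |\langle \theta, B_{\mathrm s}\rangle|$, with constants depending only on $N$.

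For these two estimates the point is that the prescribed finite families already contain bases of the respective subspaces. With $E_{ij}$ the standard matrix units, the matrices $E_{ij} - E_{ji}$ for $i<j$ (entries in $\{-1,0,1\}$) form a basis of $so(N)$, and if $B_{\mathrm a} = \sum_{i<j} b_{ij}(E_{ij}-E_{ji})$ then $\langle E_{ij}-E_{ji}, B_{\mathrm a}\rangle = 2b_{ij}$ while $|B_{\mathrm a}|^2 = 2\sum_{i<j} b_{ij}^2$, so $|B_{\mathrm a}|^2 \leq C_N \max_{i<j} |\langle E_{ij}-E_{ji}, B_{\mathrm a}\rangle|^2 \leq C_N \max_\omega|\langle \omega, B_{\mathrm a}\rangle|^2$. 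Likewise $\{E_{ii}\}_i \cup \{E_{ij}+E_{ji}\}_{i<j}$ (entries in $\{0,1\}$) is a basis of $sym(N)$ and an identical computation gives $|B_{\mathrm s}|^2 \leq C_N \max_\theta |\langle \theta, B_{\mathrm s}\rangle|^2$. Combining this with $|B| \leq |B_{\mathrm a}| + |B_{\mathrm s}|$ and undoing the reduction $B = QA$ proves the proposition.

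I do not expect a real obstacle here; the argument is routine. The only point worth stressing is conceptual rather than computational: one never needs \emph{all} of the $\{-1,0,1\}$- (resp. $\{0,1\}$-) matrices, a single basis of $so(N)$ (resp. $sym(N)$) contained in the family is enough, and it is the orthogonal invariance of the Frobenius norm together with the orthogonal splitting $\R^{N\times N} = so(N) \oplus sym(N)$ that lets one discard $Q$ and handle the antisymmetric and symmetric contributions separately. This is exactly the analogue, for the normal/tangential splitting $\R^{N}\ni v = (v\cdot u)u + (v - (v\cdot u)u)$ along $u \in \S^{N-1}$ used in \eqref{eq:nablaudecomp}, of the splitting of $\R^{N\times N}$ along $Q \in SO(N)$ into the normal space $Q\cdot sym(N)$ and the tangent space $T_Q SO(N) = Q\cdot so(N)$.
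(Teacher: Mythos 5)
Your argument is correct and complete. The paper itself states Proposition~\ref{pr:decompSO} without proof (for the sphere analogue \eqref{eq:nablaudecomp} it points to Lagrange's identity and the appendix of \cite{DSpalphSphere}), so there is no competing argument to compare against; your reduction — pull out $Q$ by orthogonal invariance of the Frobenius norm, split $B=QA$ orthogonally into $so(N)\oplus sym(N)$, and control each piece by pairing against the bases $\{E_{ij}-E_{ji}\}_{i<j}$ and $\{E_{ii}\}\cup\{E_{ij}+E_{ji}\}_{i<j}$, which lie in the prescribed $\{-1,0,1\}$- and $\{0,1\}$-families — is exactly the natural one, and your closing identification of $Q\cdot sym(N)$ and $Q\cdot so(N)$ as the normal and tangent spaces matches how the proposition is used in Lemmas~\ref{la:orthogonalpartSO} and~\ref{la:tangentialpartSO}. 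The only caveat is typographical: the statement in the paper omits the constant $C$ on the right-hand side (and writes $\sigma$ for $\theta$), and your reading, with the constant inserted, is the intended one.
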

\end{appendix}

\bibliographystyle{plain}%
\bibliography{bib}%

\end{document}